\def\End{{\rm End}}
\def\Ker{{\rm Ker}}
\def\Tr{{\rm Tr}}
\theoremstyle{plain}
\newtheorem{theorem}{Theorem}[section]
\newtheorem{proposition/example}[theorem]{Proposition/Example}
\newtheorem{proposition}[theorem]{Proposition}
\newtheorem{corollary}[theorem]{Corollary}
\newtheorem{lemma}[theorem]{Lemma}
\newtheorem{conjecture}[theorem]{Conjecture}
\theoremstyle{definition}
\newtheorem{definition}[theorem]{Definition}
\newtheorem{remark}[theorem]{Remark}
\newtheorem{conjecture/question}[theorem]{Conjecture/Question}
\newtheorem{remark/definition}[theorem]{Remark/Definition}
\newtheorem{definition/notation}[theorem]{Definition/Notation}
\numberwithin{equation}{section}
\begin{document}
\title{\textbf{Curvature Formulas Related to a Family of Stable Higgs Bundles}}

\author{Zhi Hu}

\address{ \textsc{School of Science, Nanjing University of Science and Technology, Nanjing 210094,  China}\endgraf \textsc{Research Institute for Mathematical Sciences, Kyoto University, Kyoto, 606-8502, Japan}\endgraf \textsc{Department of Mathematics, Mainz University, 55128 Mainz, Germany}}

\email{halfask@mail.ustc.edu.cn; huz@uni-mainz.de}

\author{Pengfei Huang}

\address{ \textsc{Mathematisches Institut, Ruprecht-Karls Universit\"at Heidelberg, Im Neuenheimer Feld 205, 69120 Heidelberg, Germany}}

\email{pfhwang@mathi.uni-heidelberg.de; pfhwangmath@gmail.com}

\subjclass[2010]{53C07, 14J60, 14D22}

\keywords{Stable Higgs bundle, Moduli space, Weil--Petersson-type metric, Finsler metric}
\date{}

\begin{abstract}
In this paper,  we investigate  the geometry of the base complex manifold of an effectively parametrized holomorphic family of   stable Higgs bundles over a fixed  compact K\"{a}hler manifold. The starting point of our study  is Schumacher--Toma/Biswas--Schumacher's curvature formulas for Weil--Petersson-type metrics, in Sect. \ref{sec2}, we give some applications of their formulas on the geometric properties of the base manifold.  In Sect. \ref{sec3}, we calculate the curvature  on the higher direct image bundle, which recovers Biswas--Schumacher's curvature formula. In Sect. \ref{sec4}, we construct a smooth and strongly pseudo-convex complex Finsler metric for the base manifold, the corresponding holomorphic sectional curvature is calculated explicitly.
\end{abstract}

\maketitle
\tableofcontents

\section{Introduction}\label{sec1}
Studying the  moduli space of certain geometric objects from the viewpoint of differential geometry is an important approach to understand the geometry of the moduli space. A basic starting point  is to endow the moduli space with a suitable Riemannian metric. If the parametrized geometric object is equipped with  a metric, in general,  the moduli space could inherit a natural metric as a functional of the metric on the  geometric object. Such metric on the  moduli space is usually called the \emph{Weil--Petersson-type metric}.
There are two typical examples of applying such ideas.

\begin{enumerate}
  \item[(i)] The moduli space of certain compact  polarized complex manifolds:
  \begin{itemize}
    \item For the moduli space  $\mathcal{M}_g$ (and the Teichm\"uller space $\mathcal{T}_g$) of compact Riemann surfaces of genus $g\geq 2$, Ahlfors
 showed that the Weil--Petersson metric on $\mathcal{T}_g$  is a K\"{a}hler metric whose Ricci and holomorphic sectional curvatures are negative \cite{Ah1,Ah2} (see also \cite{FT} by Fischer and Tromba). Later Royden  proved that the holomorphic sectional curvature of the Weil--Petersson metric is bounded away from zero \cite{Ro}. Afterwards Wolpert  showed that the holomorphic sectional curvature of the Weil--Petersson metric is bounded above by
$-\frac{1}{2\pi(g-1)}$ \cite{Wol}, which confirms a conjecture of Royden \cite{Ro}. This immediately implies the moduli space $\mathcal{M}_g$ is  Kobayashi hyperbolic.
    \item Due to the famous results of  Aubin \cite{Au} and Yau \cite{Yau}, every  compact canonically  polarized complex manifold admits a unique K\"{a}hler--Einstein metric of negative Ricci curvature unique up to a positive multiplicative constant. Then  the  moduli space of compact canonically  polarized complex manifolds can be equipped with a Weil--Petersson-type metric, and Siu computed the corresponding curvature tensor \cite{Siu}.
    \item Also by Yau's solution to the Calabi conjecture \cite{Yau}, there is a Weil--Petersson-type metric on the moduli space of compact  polarized Calabi--Yau manifolds. Strominger  gave the curvature formula for the case of Calabi--Yau threefolds  using the Yukawa couplings \cite{Str}.
  \end{itemize}
  \item[(ii)] The moduli space  of stable  bundles over a fixed  compact K\"{a}hler manifold:
  \begin{itemize}
    \item Thanks to many author's (Narasimhan--Seshadri \cite{NS}, Donaldson \cite{Don}, Uhlenbeck--Yau \cite{UY}) work on   the existence of Hermitian--Einstein metrics on  stable vector bundles, we also have the Weil--Petersson-type metric on the  moduli space of stable vector bundles over a fixed  compact K\"{a}hler manifold. In this case, Schumacher and Toma calculated the corresponding curvature tensor \cite{ST}.
    \item The notion of  Higgs field on a vector bundle was introduced by Hitchin \cite{Hit} for Riemann surface case and by Simpson \cite{Sim} in general, they also showed that stable Higgs bundles admit  Hermitian--Einstein metrics. The work of Schumacher and Toma is then generalized to the moduli space of stable Higgs bundles over a fixed  compact K\"{a}hler manifold by Biswas and Schumacher \cite{BS}.
  \end{itemize}
\end{enumerate}

In order to capture more geometric information of moduli spaces, the method mentioned above is developed by constructing certain suitable Finsler metric on the moduli space. If the moduli space has an initial Riemannian  metric, there is a way to construct the Finsler metric by recursive introducing   the higher order parts from  the terms in the curvature with fixed sign. For the moduli space of certain manifolds, this idea has been carried out. For example,
 Schumacher \cite{Sch} and To--Yeung \cite{TY} constructed  a Finsler metric on the moduli space of compact canonically polarized complex manifolds
   based on Siu's curvature formula, and Deng \cite{Den} constructed a Finsler metric on the moduli space of compact  polarized complex manifolds with semi-ample canonical bundles based on Griffiths's curvature formula of Hodge bundles \cite{Gri}.
   As remarkable  applications, they can show the hyperbolicity in certain sense for these moluli spaces by calculating  the holomorphic sectional curvature with respect to the  Finsler metric.

This paper investigates the geometry of  the base complex manifold of an effectively parametrized holomorphic family of   stable Higgs bundles over a fixed  compact K\"{a}hler manifold by calculating the curvature with respect to  suitable Weil--Petersson-type metric or Finsler metric.

\subsection{Setup and Some Notations}

\hspace*{5pt}

Let $X$ be  an $n$-dimensional compact K\"{a}hler manifold equipped with a  K\"{a}hler form $\omega_X=\sqrt{-1}g_{\alpha\bar \beta}dz^\alpha\wedge d\bar z^\beta$ in terms of the local holomorphic coordinates $(z^1,\cdots, z^n)$,  and  let $E$  be a vector bundle over $X$ with a Hermitian metric $h$.
We introduce the following notations
\begin{enumerate}
\item[$\bullet$] For any smooth $\mathrm{End}(E)$-valued $(p,q)$-form $\theta\in \mathcal{A}^{p,q}_X(\mathrm{End}(E))$, its Hermitian conjugate $\theta^{*_h}\in \mathcal{A}_X^{q,p}(\mathrm{End}(E))$ is defined by
$$
h(\theta(u),v)=h(u,\theta^{*h}(v))
$$
for any $u,v\in C^\infty(E)$.
\item[$\bullet$] For any two smooth $\mathrm{End}(E)$-valued $r$-forms $(r\leq n)$ $\varphi,\psi\in\mathcal{A}_X^r(\mathrm{End}(E))$, writing
\begin{align*}
 \varphi&=\sum_{p+q=r}\varphi_{\alpha_1\cdots\alpha_{p}\bar\beta_1\cdots\bar\beta_{q}}dz^{\alpha_1}\wedge \cdots\wedge dz^{\alpha_p}\wedge d\bar z^{\beta_1}\wedge \cdots\wedge d\bar z^{\beta_q},\\
 \psi&=\sum_{p+q=r}\psi_{\gamma_1\cdots\gamma_{p}\bar\delta_1\cdots\bar\delta_{q}}dz^{\gamma_1}\wedge \cdots\wedge dz^{\gamma_p}\wedge d\bar z^{\delta_1}\wedge \cdots\wedge d\bar z^{\delta_q},
\end{align*}
 we define their inner product with respect to the induced metric $\tilde{h}$ on $\mathcal{A}_X^r(\mathrm{End}(E))$ as
\begin{align*}
\tilde{h}(\varphi,\psi)=\sum_{p+q=r}g^{\bar \gamma_1\alpha_1}\cdots g^{\bar\gamma_p\alpha_p}g^{\bar\beta_1\delta_1}\cdots g^{\bar\beta_q\delta_q}\Tr(\varphi_{\alpha_1\cdots\alpha_{p}\bar\beta_1\cdots\bar\beta_{q}}\psi^{*_h}_{\bar\gamma_1\cdots\bar \gamma_{p}\delta_1\cdots\delta_{q}}).
\end{align*}
\item[$\bullet$] For any operator $\Xi: \mathcal{A}_X^{p,q}(\mathrm{End}(E))\to\mathcal{A}^{r,s}_X(\mathrm{End}(E))$, its formal adjoint $\Xi^{\dagger_{\tilde h}}: \mathcal{A}^{r,s}_X(\mathrm{End}(E))\to\mathcal{A}^{p,q}_X(\mathrm{End}(E))$ with respect to the induced metric $\tilde h$ is defined by
$$
\int_X\tilde{h}(\Xi\varphi,\psi)\frac{\omega_X^n}{n!}=\int_X\tilde{h}(\varphi,\Xi^{\dagger_{\tilde h}}\psi)\frac{\omega_X^n}{n!}
$$
for any $\varphi\in\mathcal{A}_X^{p,q}(\mathrm{End}(E))$ and $\psi\in\mathcal{A}_X^{r,s}(\mathrm{End}(E))$.
\end{enumerate}

Let $S$ be an $m$-dimensional  complex manifold, and let $\{(\mathcal{E}_s,\Phi_s)\}_{s\in S}$ be a holomorphic family of stable Higgs bundles of rank $r$ on $X$ parametrized by  $S$, namely a Higgs bundle $(\mathcal{E},\Phi)$ over $\tilde{X}:= X\times S$ such that $(\mathcal{E},\Phi)|_{X\times\{s\}}=(\mathcal{E}_s,\Phi_s)$ for each $s\in S$.   The relative Higgs complex associated to $(\mathcal{E},\Phi)$ is given by
$$
\mathbf{H}_{\mathrm{r}}: 0\longrightarrow\mathcal{E}\xlongrightarrow{\Phi\wedge}\mathcal{E}\otimes\Omega_{\tilde{X}/S}^1\xlongrightarrow{\Phi\wedge}\cdots.
$$
It is known that the $d$-th direct image sheaf $\mathbb{R}^d\pi_*\mathbf{H}_{\mathrm{r}}$ is coherent over $S$, and is thus locally free outside a proper analytic subvariety $Z^{(d)}$ of $S$, where $\pi: \tilde{X}\to S$ is the projection onto the second factor. Moreover, $\mathbb{R}^d\pi_*\mathbf{H}_{\mathrm{r}}|_s=\mathbb{H}^d(X,\mathbf{H}_s)$ for any $s\in X\backslash Z^{(d)}$, where 
$$
\mathbf{H}_s: 0\longrightarrow\mathcal{E}_s\xlongrightarrow{\Phi_s\wedge}\mathcal{E}_s\otimes\Omega_X^1\xlongrightarrow{\Phi_s\wedge}\cdots
$$
is the Higgs complex associated to the Higgs bundle $(\mathcal{E}_s,\Phi_s)$.
In particular, if both $X$ and $S$ are Riemann surfaces, then Donagi--Pantev--Simpson's result {\cite[Theorem 3.6]{DPS}} implies that  $\mathbb{R}^d\pi_*\mathbf{H}_\mathrm{r}$ is locally free over $S$.

By the short exact sequence 
$$
0\longrightarrow\pi^*\Omega_S^1\otimes\Omega_{\tilde{X}/S}^{p-1}\longrightarrow\Omega_{\tilde{X}}^p/(\pi^*\Omega_S^2\otimes\Omega_{\tilde{X}/S}^{p-2})\longrightarrow\Omega_{\tilde{X}/S}^p\longrightarrow0,
$$
 these is a  connecting morphism in the corresponding long exact sequence
$$
\rho: \mathbb{R}^1\pi_*\mathbf{H}_{\mathrm{r}}\longrightarrow\mathbb{R}^2\pi_*(\pi^*\Omega_S^1\otimes\mathbf{H}_{\mathrm{r}}[-1])=\Omega_S^1\otimes\mathbb{R}^1\pi_*\mathbf{H}_{\mathrm{r}},
$$
or equivalently,
$$
\rho: TS\longrightarrow\mathrm{End}(\mathbb{R}^1\pi_*\mathbf{H}_{\mathrm{r}}),
$$
hence when restricting at each point $s\in S\backslash Z^{(1)}$, we have a map called the \emph{Kodaira--Spencer map}
$$
\rho_s: T_sS\longrightarrow\mathbb{H}^1(X,\mathbf{EH}_s),
$$
where $\mathbb{H}^1(X,\mathbf{EH}_s)$ is the first hypercohomology of the complex
$$
\mathbf{EH}_s: 0\longrightarrow\mathrm{End}(\mathcal{E}_s)\xlongrightarrow{\Phi_s\otimes\mathrm{Id}+\mathrm{Id}\otimes\Phi_s}\mathrm{End}(\mathcal{E}_s)\otimes\Omega_X^1\longrightarrow\cdots.
$$
Actually, the Kodaira--Spencer map $\rho_s$ can be defined for all $s\in S$ \cite{ST,BS}.

In this paper, we always assume our family $(\mathcal{E},\Phi)$ is \emph{effectively parametrized}, namely the Kodaira--Spencer map $\rho_s$
 is injective for any $s\in S$.

 By the Kobayashi--Hitchin correspondence for Higgs bundles  \cite{Hit,Sim},  on each fiber $(\mathcal{E}_s,\Phi_s)$ for $s\in S$, there is a Hermitian--Einstein metric $h_s$, and the family $\{h_s\}_{s\in S}$ of metrics  induces  a Hermitian metric $h$ on $\mathcal{E}$. We introduce the operators $\mathbf{d}, \mathbf{d}^{\dagger_{\tilde{h}}}, \Box$, which are fiberwisely defined as follows
 \begin{enumerate}
 \item[$\bullet$] $\mathbf{d}|_s=\bar\partial_{\mathcal{E}_s}+\Phi_s$, where  $\bar\partial_{\mathcal{E}_s}$ stands for the holomorphic structure on $\mathcal{E}_s$;
 \medskip
 \item[$\bullet$] $\mathbf{d}^{\dagger_{\tilde h}}|_s=(\bar\partial_{\mathcal{E}_s})^{\dagger_{\tilde {h}_s}}+(\Phi_s)^{\dagger_{\tilde {h}_s}}$;
 \medskip
 \item[$\bullet$] $
 \Box|_s=\mathbf{d}|_s\mathbf{d}^{\dagger_{\tilde h}}|_s+\mathbf{d}^{\dagger_{\tilde h}}|_s\mathbf{d}|_s.
 $
 \end{enumerate}

Let $\mathbf{TH}_s$ be the total complex associated to the Dolbeault resolution of $\mathbf{H}_s$, then the hypercohomology $\mathbb{H}^\bullet(X,\mathbf{H}_s)$ is computed by the usual cohomology $H^\bullet(X,\mathbf{TH}_s)$, and by applying harmonic theory via the metric $h_s$ one finds the unique harmonic representative for the cohomology class in $\mathbb{H}^\bullet(X,\mathbf{H}_s)$ (more detailed can be found in the Section 3 of \cite{BS}).  Let $\mathfrak{t}$ be a harmonic representative  of $\mathbb{H}^d(X,\mathbf{H}_\mathrm{s})$, then we have
$$
\mathbf{d}|_s\mathfrak{t}=\mathbf{d}^{\dagger_{\tilde{h}}}|_s\mathfrak{t}=0,
$$
or equivalently, expressing
$$
 \mathfrak{t}=\sum_{p+q=d}\mathfrak{t}^{p,q}=\sum_{p+q=d}\mathfrak{t}_{\alpha_1\cdots\alpha_{p}\bar\beta_1\cdots\bar\beta_{q}}dz^{\alpha_1}\wedge \cdots\wedge dz^{\alpha_p}\wedge d\bar z^{\beta_1}\wedge \cdots\wedge d\bar z^{\beta_q}
 $$
we have
\begin{align*}
  \bar\partial_{\mathcal{E}_s}\mathfrak{t}^{p,q}+\Phi_s(\mathfrak{t}^{p-1,q+1})=&0,\\
  [\Lambda_{\omega_X},\partial^{h_s}_{\mathcal{E}_s}]\mathfrak{t}^{p-1,q+1}+[\Lambda_{\omega_X},\Phi^{*_h}_s]\mathfrak{t}^{p,q}=&0
\end{align*}
for any $1\leq p, q\leq n$, where the second equation is due to the K\"ahler identity.
 
 Let $U\subseteq S$ be an open neighborhood of $s\in S$ with  local holomorphic coordinates given by $(s^1,\cdots,s^m)$, the  curvature form with respect to the metric $h$ can be expressed locally as\footnote{Throughout this paper, we use the lowercase Greek letters $\alpha,\beta,\gamma,\cdots$ and Roman letters $i,j,k,\cdots$ for the coordinates on $X$ and $S$, respectively.}  
$$
R=R_{\alpha\bar \beta}dz^\alpha\wedge d\bar z^\beta+R_{i\bar \alpha}ds^i\wedge d\bar z^\alpha+R_{\alpha\bar i} d z^\alpha\wedge d\bar s^i+R_{i\bar j}ds^i\wedge d\bar s^j.
$$
 We  write $\Phi=\Phi_\alpha dz^\alpha, \Phi^{*_h}=\Phi^{*_h}_{\bar \alpha} d\bar z^{ \alpha}$. In particular, the Hermitian--Einstein condition at each $s\in S$ is given by
 \begin{align}\label{he}
  g^{\bar \beta\alpha}(R_{\alpha\bar \beta}+[\Phi_\alpha,\Phi^{*_h}_{\bar \beta}])|_s=\lambda_s\cdot \mathrm{Id}_{\mathcal{E}_s}
 \end{align}
 for some real constant $\lambda_s$ determined by the slope of $\mathcal{E}_s$ and the volume of $X$. 
 
 Again by the Dolbeault resolution and harmonic theory, we have the unique harmonic representative for the cohomology class in $\mathbb{H}^\bullet(X,\mathbf{EH}_s)$. In \cite{BS}, Biswas and Schumacher gave the harmonic representative for the class of the image of the Kodaira--Spencer map. More precisely, let $\nabla_i$ and $\nabla_{\bar i}$ be the covariant derivatives along the directions $\frac{\partial}{\partial s^i}$ and $\frac{\partial}{\partial \bar s^i}$ determined by the connection of the metric $h$, respectively, and define a $C^\infty(\End(\mathcal{E}))$-valued 1-form
 $$
 \eta_i=R_{i\bar \alpha}d\bar z^\alpha+\nabla_i\Phi_\alpha dz^\alpha,
 $$
then for any $s\in U$, $\eta_i|_s$ is the harmonic representative of $\rho_s(\frac{\partial}{\partial s^i}|_s)$ in $\mathbb{H}^1(X, \mathbf{EH}_s)$ due to the Hermitian--Einstein condition. In particular, we have
$$
\mathbf{d}|_s(\eta_i|_s)=\mathbf{d}^{\dagger_{\tilde h}}|_s(\eta_i|_s)=0
$$
for any $s\in U$.

\begin{definition}
Let $\xi=\xi^i\frac{\partial}{\partial s^i}$ be a local holomorphic vector field over  $U$.
\begin{enumerate}
  \item The \emph{Kodaira--Spencer image }of $\xi$ is defined as
  $$
  \mathbb{H}(\xi)=\xi^i\eta_i=\xi^i(R_{i\bar \alpha}d\bar z^\alpha+\nabla_i\Phi_\alpha dz^\alpha),
  $$
  namely $\mathbb{H}(\xi)|_s$ is the harmonic representative of $\rho_s(\xi|_s)$ in $\mathbb{H}^1(X, \mathbf{EH}_s)$ for each $s\in U$.
  \item  $\xi$ is called \emph{relatively harmonic} if 
  $$
 \mathbf{d}|_s [\mathbb{H}(\xi)|_s,G|_s(\mathbb{H}(\xi)|_s)^{\dagger_{\tilde{ h}_s}}\mathbb{H}(\xi)_s]=\mathbf{d}^{\dagger_{\tilde h}}|_s[\mathbb{H}(\xi)|_s,G|_s(\mathbb{H}(\xi)|_s)^{\dagger_{\tilde h_s}}\mathbb{H}(\xi)|_s]=0
 $$
 for any $s\in U$, where $G=\Box^{-1}$ denotes the Green operator that corresponds to $\Box$.
 
\end{enumerate}
\end{definition}

\subsection{Main Results}

\hspace*{5pt}

With the notations introduced above, we consider a holomorphic family of stable Higgs bundles of rank $r$ over $X$ effectively parametrized by $S$. The  Weil--Petersson-type metric $G^{\mathrm{WP}}$ on $S$ is defined as follows \cite{BS}
 \begin{align*}
  G_{i\bar j}^{\mathrm{WP}}=G^{\mathrm{WP}}\Big(\frac{\partial}{\partial s^i}, \frac{\partial}{\partial \bar s^j}\Big)=&\int_X\tilde h(\eta_i,\eta_j)\frac{\omega_X^n}{n!}\\
  =&\int_Xg^{\bar \alpha\beta}\Tr(R_{i\bar \alpha}R_{\beta \bar j}+\nabla_i\Phi_\beta\nabla_{\bar j}\Phi^{*_h}_{\bar \alpha})\frac{\omega_X^n}{n!}.
 \end{align*}
The assumption of effective parametrization guarantees the positive-definiteness of $G^{\mathrm{WP}}$. It is easy to check that this metric is K\"{a}hler. Biswas and Schumacher calculated the  curvature of $G^{\mathrm{WP}}$ as \cite{BS}
\begin{align}\label{1}
 \mathcal{R}_{i\bar jk\bar l}=&\int_X\Tr(R_{i
\bar j}\Box R_{k\bar l}+R_{i
\bar l}\Box R_{k\bar j})\frac{\omega_X^n}{n!}\nonumber\\
&-\int_X\Tr([\eta_i\wedge \eta_k]\wedge G([\eta^{*_h}_{\bar j}\wedge\eta^{*_h}_{\bar l}]))\wedge \frac{\omega_X^{n-2}}{(n-2)!},
\end{align}
where $[\bullet\wedge \bullet]$ stands for the exterior product of forms with values in an endomorphism bundle combined with the Lie bracket.

In Sect. \ref{sec3}, we consider the locally free higher direct image sheaf $\mathbb{R}^d\pi_*\mathbf{H}_\mathrm{r}$ of rank $r_d$ over $S\backslash Z^{(d)}$ with respect to the relative Higgs complex $\mathbf{H}_\mathrm{r}$ introduced above. $\mathbb{R}^d\pi_*\mathbf{H}_\mathrm{r}$ is equipped with an $L^2$-metric $H$ as follows
$$
H(\mathfrak{t},\mathfrak{t}'):=\int_X\tilde{h}_s(\mathfrak{t},\mathfrak{t}')\frac{\omega_X^n}{n!},
$$
where  $\mathfrak{t},\mathfrak{t}^\prime$ are  harmonic representatives  of $\mathbb{H}^d(X,\mathbf{H}_\mathrm{s})$, and
$$
\tilde{h}_s(\mathfrak{t},\mathfrak{t}')=\sum_{p+q=d}g^{\bar \gamma_1\alpha_1}\cdots g^{\bar\gamma_p\alpha_p}g^{\bar\beta_1\delta_1}\cdots g^{\bar\beta_q\delta_q}h_s(\mathfrak{t}_{\alpha_1\cdots\alpha_{p}\bar\beta_1\cdots\bar\beta_{q}},\mathfrak{t}'_{\gamma_1\cdots \gamma_{p}\bar\delta_1\cdots\bar\delta_{q}}).
$$

We calculate the corresponding curvature tensor,  which generalizes the results of To--Weng in \cite{TW} and Geiger--Schumacher in \cite{m2}.

\begin{theorem}[= Theorem \ref{curfor}]\label{thm1}
The curvature tensor $\mathfrak{R}$ of $(\mathbb{R}^d\pi_*\mathbf{H}_\mathrm{r},H)$ over $S\backslash Z^{(d)}$ is given by
\begin{align}\label{curf}
\begin{aligned}
 \mathfrak{R}_{a\bar b i\bar j}=\ &\bigg(\frac{1}{r\mathrm{Vol}(X,\omega_X)}\int_X\Tr(R_{i\bar j})\frac{\omega_X^n}{n!}\bigg)\cdot H_{a\bar b}+\int_X \tilde h\big(G\big(\eta_j^{\dagger_{\tilde h}}(t_a)\big),\eta_i^{\dagger_{\tilde h}}(t_b)\big)\frac{\omega_X^n}{n!}\\
 &-\int_X \tilde{h}\big(G\big((\eta_j^{\dagger_{\tilde h}}\eta_i)(t_a)\big),t_b\big)\frac{\omega_X^n}{n!}-\int_X \tilde h\big(G(\eta_i\wedge t_a),\eta_j\wedge t_b\big)\frac{\omega_X^n}{n!}
 \end{aligned}
\end{align}
over an open  neighborhood $U\subseteq S\backslash Z^{(d)}$
for
$a,b=1,\cdots, r_d, i,j=1,\cdots,m$, where
 $\{t_1|_s,\cdots, t_{r_d}|_s\}$ forms a basis of $\mathbb{H}^d(X,\mathbf{H}_s)$ with each $t_a|_s$ being $\Box|_s$-harmonic for any $s\in U$.
\end{theorem}

It is obvious that $G^{\mathrm{WP}}$ can be viewed as a metric on the sheaf $\mathbb{R}^1\pi_*\mathbf{EH}_{\mathrm{r}}$ over $S$, where $\mathbf{EH}_\mathrm{r}$ is the relative Higgs complex associated to the Higgs bundle $(\mathrm{End}(\mathcal{E}),\Phi\otimes\mathrm{Id}+\mathrm{Id}\otimes\Phi)$. Then by applying above curvature formula for $(\mathbb{R}^1\pi_*\mathbf{EH}_{\mathrm{r}}, G^{\mathrm{WP}})$, in other words, replacing $t_a$ by $\eta_i$ in \eqref{curf}, we can recover the curvature formula \eqref{1} of Biswas and Schumacher. 

In Sect. \ref{sec4}, we continue to consider a holomorphic family of stable Higgs bundles of rank $r$ over $X$ effectively parametrized by $S$ from the viewpoint of Finsler geometry. When considering the holomorphic sectional curvature of the Weil--Petersson-type metric $G^{\mathrm{WP}}$, one immediately  finds that the first term on the right hand side of \eqref{1} is semipositive, and the second term becomes seminegative, which provides the higher-order part of certain Finsler metric according to the method of recursion mentioned above. The Finsler metric is constructed by adding this part into the Weil--Petersson-type metric, namely our Finsler metric $F_\kappa$ with a parameter $\kappa\geq0$ is given by 
 $$
F_\kappa=\sqrt[4]{F_{(1)}^4+\kappa F_{(2)}^4},
$$
where
\begin{align*}
 F_{(1)}(v)&=\sqrt{\int_X\tilde h( \mathbb{H}(\xi),\mathbb{H}(\xi))\frac{\omega^n}{n!}}, \\
 F_{(2)}(v)&=\sqrt[4]{\int_X\tilde h( [\mathbb{H}(\xi)\wedge\mathbb{ H}(\xi)],G[\mathbb{H}(\xi)\wedge \mathbb{H}(\xi)])\frac{\omega^n}{n!}}
\end{align*}
for $v=(s,\xi)\in TS$.

We show that $F_\kappa$ is a smooth and strongly pseudo-convex complex  Finsler metric on $S$ (Proposition \ref{spc}), and the corresponding holomorphic sectional curvature is calculated explicitly.

\begin{theorem}[= Theorem \ref{finslercur}]\label{thm2}
Let $\{s^1,\cdots,s^m\}$ be the local holomorphic coordinate on $S$.
The holomorphic sectional curvature of the Finsler metric $F_\kappa$
 is given by

\begin{align*}
 K_{F_\kappa}\Big(\frac{\partial}{\partial s^i}\Big)
 = &\ \bigg(\Big(\int_X\tilde h(\eta_i,\eta_i)\frac{\omega^n}{n!}\Big)^2+\kappa\int_X\tilde h(\mathbf{d}^{\dagger_{\tilde h}}G[\eta_i\wedge \eta_i], \mathbf{d}^{\dagger_{\tilde h}}G[\eta_i\wedge \eta_i])\frac{\omega^n}{n!}\bigg)^{-\frac{3}{2}}\\
 &\cdot\bigg[2\Big(\int_X\tilde h(\eta_i,\eta_i)\frac{\omega^n}{n!}\Big)\Big(2\int_X h(\eta_i^{\dagger_{\tilde h}}\eta_i,G\eta_i^{\dagger_{\tilde h}}\eta_i)\frac{\omega^n}{n!}-\int_X\tilde h(\mathbf{d}^{\dagger_{\tilde h}}G[\eta_i\wedge \eta_i], \mathbf{d}^{\dagger_{\tilde h}}G[\eta_i\wedge \eta_i])\frac{\omega^n}{n!}\Big)\\
 &\ \ \ \ +\kappa\Big(-5\int_X\tilde h([G\eta_i^{\dagger_{\tilde h}}\eta_i,\mathbf{d}^{\dagger_{\tilde h}}G[\eta_i\wedge \eta_i]],\mathbf{d}^{\dagger_{\tilde h}}G[\eta_i\wedge \eta_i])\frac{\omega^n}{n!}\\
  &\ \ \ \ \ \ \ \ \ \ \ +5\int_Xh(G\mathbf{d}^{\dagger_{\tilde h}}\eta_{ i}^{\dagger_{\tilde h}}G[\eta_i\wedge \eta_i],\mathbf{d}^{\dagger_{\tilde h}}\eta_{ i}^{\dagger_{\tilde h}}G[\eta_i\wedge \eta_i])\frac{\omega^n}{n!}\\
 &\ \ \ \ \ \ \ \ \ \ \ -12\mathrm{Re}\int_X h(G\mathbf{d}^{\dagger_{\tilde h}}[\eta_i,G\eta_i^{\dagger_{\tilde h}}\eta_i],\mathbf{d}^{\dagger_{\tilde h}}\eta_{i}^{\dagger_{\tilde h}} G[\eta_i\wedge \eta_i])\frac{\omega^n}{n!}\\ &\ \ \ \ \ \ \ \ \ \ \ -4\int_X\tilde h(G[\eta_i\wedge\mathbf{d}G\eta_i^{\dagger_{\tilde h}}\eta_i],[\eta_i\wedge\mathbf{d}G\eta_i^{\dagger_{\tilde h}}\eta_i])\frac{\omega^n}{n!}\\
  &\ \ \ \ \ \ \ \ \ \ \ -\int_X\tilde h(G^2 [[\eta_i\wedge \eta_i]\wedge \eta_i],[[\eta_i\wedge \eta_i]\wedge \eta_i])\frac{\omega^n}{n!}\Big)\bigg]\\
  &+\kappa^2\bigg(\Big(\int_X\tilde h(\eta_i,\eta_i)\frac{\omega^n}{n!}\Big)^2+\kappa\int_X\tilde h(\mathbf{d}^{\dagger_{\tilde h}}G[\eta_i\wedge \eta_i], \mathbf{d}^{\dagger_{\tilde h}}G[\eta_i\wedge \eta_i])\frac{\omega^n}{n!}\bigg)^{-\frac{5}{2}}\\
 &\ \ \ \cdot \bigg|\int_X\tilde h( [(\mathbf{d}^{\dagger_{\tilde h}}G[\eta_i\wedge \eta_i])\wedge\eta_i],G[\eta_i\wedge \eta_i])\frac{\omega^n}{n!}+2\int_X\tilde h([\eta_i\wedge \mathbf{d} G\eta_i^{\dagger_{\tilde h}}\eta_i],G[\eta_i\wedge \eta_i])\frac{\omega^n}{n!}\bigg|^2.
\end{align*}
\end{theorem}

Applying this formula to the case when $X$ is a compact K\"ahler surface, we immediately have the following corollary.

\begin{corollary}
Suppose $X$ is a compact K\"{a}hler surface.
Let $\xi$ be a  local holomorphic vector field over a neighborhood $U\subseteq S$. Assume $\xi$ is relatively harmonic and $G(\mathbb{H}(\xi))^{\dagger_{\tilde{ h}}}\mathbb{H}(\xi)$ is a negative-definite operator on $\mathcal{A}_X^1(\End(E))$ at some point $s\in U$. If the holomorphic sectional curvature of $G^{\mathrm{WP}}$ is nonzero at $s$, then there exists a smooth and  strongly pseudo-convex Finsler metric $F$ such that $K_F(\xi)|_s>0$.
\end{corollary}

\section{Some Applications of Biswas--Schumacher's Curvature Formula}\label{sec2}

\subsection{Some Identities} 

\hspace*{5pt}

The following lemma collects some useful identities that will be used frequently later. The proof can be found in \cite{BS}, basically, they follow from the Ricci identity and the K\"ahler identity.

\begin{lemma}[\cite{BS}]\label{4m}
We have the following identities
\begin{enumerate}
\item $\mathbf{d}\nabla_i\eta_j+[\eta_i\wedge \eta_j]=0,$
\smallskip
\item $\mathbf{d}^{\dagger_{\tilde h}}\nabla_{\bar j}\eta_i+\eta_j^{\dagger_{\tilde h}}\eta_i=0,$
\smallskip
\item $\mathbf{d}^{\dagger_{\tilde h}}\nabla_i\eta_j=\mathbf{d}\nabla_{\bar i}\eta_j=0$,
\smallskip
\item $\nabla_{\bar j}\eta_i=\mathbf{d}R_{i\bar j},$
\smallskip
\item $\nabla_{i}\eta^{*_h}_{\bar j}=-\mathbf{d}^{*_h}R_{i\bar j}.$
\end{enumerate}
\end{lemma}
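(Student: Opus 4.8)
The plan is to obtain all five identities by differentiating the harmonicity relations $\mathbf{d}\eta_j=\mathbf{d}^{\dagger_{\tilde h}}\eta_j=0$ from the Setup along the base directions, together with the second Bianchi and Ricci identities for the Chern connection of $(\mathcal{E},h)$ on $\tilde X = X\times S$. The three structural inputs I would isolate first are: (i) holomorphicity of the family, which gives $\nabla_{\bar i}\Phi=0$, $\nabla_i\Phi^{*_h}=0$ (by metric compatibility) and the vanishing of the $(0,2)$-curvature; (ii) the second Bianchi identity $DR=0$, whose $ds^i\wedge d\bar z^\alpha\wedge d\bar s^j$-component reads $\nabla_{\bar j}R_{i\bar\alpha}=\nabla_{\bar\alpha}R_{i\bar j}$, together with the Ricci commutation $[\nabla_i,\nabla_{\bar j}]=R_{i\bar j}$ acting by the bracket on $\End(\mathcal{E})$-valued objects; (iii) compatibility of $\nabla_i$ with $h$ and with the induced global inner products (recall $\omega_X$ is fixed, so only $h$ varies over $S$), which yields $\nabla_i(A^{\dagger_{\tilde h}})=(\nabla_{\bar i}A)^{\dagger_{\tilde h}}$ for the relevant operators $A$.

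Next I would record the operator identity that drives everything: the variation of $\mathbf{d}=\bar\partial_{\mathcal{E}}+\Phi\wedge$ along $\partial/\partial s^i$ is exactly $[\eta_i\wedge\,\cdot\,]$, i.e. $[\nabla_i,\mathbf{d}]=[\eta_i\wedge\,\cdot\,]$, since $[\nabla_i,\bar\partial_{\mathcal{E}}]=R_{i\bar\alpha}\,d\bar z^\alpha\wedge$ and $[\nabla_i,\Phi\wedge]=\nabla_i\Phi_\alpha\,dz^\alpha\wedge$ assemble precisely into $\eta_i$. Along $\partial/\partial\bar s^i$ the analogous commutator vanishes, $[\nabla_{\bar i},\mathbf{d}]=0$, because the $(0,2)$-curvature is zero and $\nabla_{\bar i}\Phi=0$. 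Applying (iii) to these two facts gives the adjoint commutators $[\nabla_i,\mathbf{d}^{\dagger_{\tilde h}}]=0$ and $[\nabla_{\bar j},\mathbf{d}^{\dagger_{\tilde h}}]=\eta_j^{\dagger_{\tilde h}}$.

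With these in hand, identities (1)--(3) are immediate by differentiating the seeds: $0=\nabla_i(\mathbf{d}\eta_j)=[\eta_i\wedge\eta_j]+\mathbf{d}\nabla_i\eta_j$ gives (1); $0=\nabla_{\bar i}(\mathbf{d}\eta_j)=\mathbf{d}\nabla_{\bar i}\eta_j$ and $0=\nabla_i(\mathbf{d}^{\dagger_{\tilde h}}\eta_j)=\mathbf{d}^{\dagger_{\tilde h}}\nabla_i\eta_j$ give (3); and $0=\nabla_{\bar j}(\mathbf{d}^{\dagger_{\tilde h}}\eta_i)=\eta_j^{\dagger_{\tilde h}}\eta_i+\mathbf{d}^{\dagger_{\tilde h}}\nabla_{\bar j}\eta_i$ gives (2). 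For (4) I would instead compute $\nabla_{\bar j}\eta_i$ termwise: its $d\bar z^\alpha$-part is $\nabla_{\bar j}R_{i\bar\alpha}\,d\bar z^\alpha=\nabla_{\bar\alpha}R_{i\bar j}\,d\bar z^\alpha$ by Bianchi, matching $\bar\partial_{\mathcal{E}}R_{i\bar j}$; its $dz^\alpha$-part is $\nabla_{\bar j}\nabla_i\Phi_\alpha\,dz^\alpha=[\Phi_\alpha,R_{i\bar j}]\,dz^\alpha$ by Ricci together with $\nabla_{\bar j}\Phi=0$, matching $[\Phi\wedge R_{i\bar j}]$; summing gives $\mathbf{d}R_{i\bar j}$. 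Finally (5) is the pointwise Hermitian conjugate of (4): applying $*_h$ to the $(j,\bar i)$-case of (4), using (iii) to turn $\nabla_{\bar i}$ into $\nabla_i$ and $\mathbf{d}$ into $\mathbf{d}^{*_h}$, and invoking the skew-Hermitian symmetry $R_{j\bar i}^{*_h}=-R_{i\bar j}$, yields $\nabla_i\eta^{*_h}_{\bar j}=-\mathbf{d}^{*_h}R_{i\bar j}$.

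The main obstacle is step (iii): rigorously justifying that covariant differentiation along the base commutes with forming the $\dagger_{\tilde h}$- (and pointwise $*_h$-) adjoint, in the form $\nabla_i(A^{\dagger_{\tilde h}})=(\nabla_{\bar i}A)^{\dagger_{\tilde h}}$. This is where the varying fibre metric $h$ and the fixed Hodge data on $X$ must be handled carefully; once the unitarity of $\nabla$ is used to show that adjunction interchanges the $(1,0)$- and $(0,1)$-base directions, the remaining bookkeeping in (1)--(5) is routine. A secondary point to verify is that the bracket conventions in $[\eta_i\wedge\eta_j]$, $\eta_j^{\dagger_{\tilde h}}\eta_i$ and $[\Phi\wedge R_{i\bar j}]$ are matched consistently with the $\End(\mathcal{E})$-action throughout.
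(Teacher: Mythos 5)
Your proposal is correct and follows essentially the same route as the source proof: the master commutators $[\nabla_i,\mathbf{d}]=[\eta_i\wedge\,\cdot\,]$ and $[\nabla_{\bar i},\mathbf{d}]=0$, their adjoints, and the Bianchi/Ricci termwise computation for (4)--(5) are exactly the mechanism of the argument in Biswas--Schumacher \cite{BS}, which this paper cites for the lemma without reproducing a proof. The step you flag as the main obstacle is resolved just as you sketch: differentiating $\widetilde{h_s}(Au,v)=\widetilde{h_s}(u,A^{\dagger_{\tilde h}}v)$ in $s$, and using that $\omega_X$ is fixed while $\nabla$ is compatible with $h$, makes the global pairing covariantly constant, whence $\nabla_i(A^{\dagger_{\tilde h}})=(\nabla_{\bar i}A)^{\dagger_{\tilde h}}$ and adjunction interchanges the $(1,0)$ and $(0,1)$ base directions, consistent with the paper's Ricci convention $[\nabla_i,\nabla_{\bar j}]=R_{i\bar j}$.
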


\subsection{Some Applications}

\hspace*{5pt}

Biswas--Schumacher's curvature formula \eqref{1} provides a differential-geometric tool to study the geometric properties of the base manifold. 

The following theorem collects some nice  results which exhibit the deep relations between the birational geometry and the positivity of holomorphic sectional curvature of manifolds.

\begin{theorem}[\cite{Yan,HW}]\label{yu}
\begin{enumerate}
  \item A compact Hermitian manifold with semipositive but not identically zero holomorphic sectional curvature has Kodaira dimension $-\infty$.
  \item A projective manifold with positive  holomorphic sectional curvature is uniruled.
\end{enumerate}
\end{theorem}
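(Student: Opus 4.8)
The plan is to reduce both assertions to a single intermediate statement: under the stated curvature hypotheses the canonical bundle $K_X$ fails to be pseudo-effective. Granting this, part (1) follows almost tautologically, and part (2) follows from the theorem of Boucksom--Demailly--P\u{a}un--Peternell, which characterizes uniruledness of a projective manifold by the non-pseudo-effectivity of its canonical bundle. The entire weight of the argument therefore rests on extracting positivity of $K_X^{-1}$ from positivity of the holomorphic sectional curvature.

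The bridge between the holomorphic sectional curvature and $K_X$ is the (Chern) scalar curvature. First I would record the Berger-type averaging identity: at each point $p$, the mean of the holomorphic sectional curvature $H(v)$ over the unit sphere $\{v\in T_pX : |v|=1\}$ equals a fixed constant $c_n>0$, depending only on $n$, times the scalar curvature $s(p)$. In the Kähler case this is the classical Berger formula; in the genuinely Hermitian case one uses the analogous identity for the Chern connection \cite{Yan}. Consequently, semipositivity of $H$ forces $s\geq 0$ everywhere, while the hypothesis that $H$ is not identically zero forces, by continuity of $H$ on the compact unit sphere bundle, a strict inequality $H(v_0)>0$ on an open set of directions over some point $p_0$, whence $s(p_0)>0$. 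Integrating, the total scalar curvature $\int_X s\,\omega_X^n$ is strictly positive in the setting of part (1); in the setting of part (2) the inequality $s>0$ holds already pointwise.

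Next I would convert positivity of the total scalar curvature into negativity of the degree of $K_X$. Passing, when $n\geq 2$, to the Gauduchon representative $\omega$ of the conformal class (whose existence on a compact Hermitian manifold is Gauduchon's theorem; for $n=1$ the metric already works and the degree is topological), the total scalar curvature is a positive multiple of $\deg_\omega K_X^{-1}=\int_X c_1(K_X^{-1},h)\wedge\omega^{n-1}$, a number independent of the auxiliary metric $h$ precisely because $\omega^{n-1}$ is $\partial\bar\partial$-closed. Hence $\deg_\omega K_X<0$. Since any pseudo-effective line bundle carries a singular metric whose curvature is a positive current, and such a current pairs nonnegatively with the closed positive form $\omega^{n-1}$, the strict inequality $\deg_\omega K_X<0$ shows that $K_X$ is not pseudo-effective.

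Finally I would conclude as follows. For (1): if $\kappa(X)\geq 0$, then $mK_X$ admits a nonzero holomorphic section for some $m\geq 1$, and the divisor of that section gives a positive current in $c_1(K_X)$, exhibiting $K_X$ as pseudo-effective, a contradiction; therefore $\kappa(X)=-\infty$. For (2): $X$ is projective and $K_X$ is not pseudo-effective, so by the Boucksom--Demailly--P\u{a}un--Peternell criterion $X$ is uniruled. The main obstacle is the non-Kähler incarnation of the averaging identity in the second paragraph, since several inequivalent scalar curvatures coexist on a Hermitian manifold, and one must check that the holomorphic sectional curvature controls precisely the one governing $\deg_\omega K_X$. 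In the intended application the base $S$ carries the Kähler Weil-Petersson metric, so the classical Berger formula applies directly and this difficulty disappears.
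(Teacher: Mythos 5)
First, a point of order: the paper does not prove this statement at all --- it is quoted from Yang [Yan] and Heier--Wong [HW] and used as a black box --- so there is no internal proof to compare against. Judged on its own terms, your outline is the standard strategy of those two references: reduce both parts to the non-pseudo-effectivity of $K_X$, deduce (1) from the fact that a $\mathbb{Q}$-effective line bundle is pseudo-effective, and deduce (2) from the Boucksom--Demailly--P\u{a}un--Peternell criterion. In the K\"ahler setting --- which is all the present paper actually needs, since the metric on $S$ is the K\"ahler metric $G^{\mathrm{WP}}$ --- every step you describe (the Berger averaging identity, the continuity argument producing $\int_X s\,\omega^n>0$, the pairing of a closed positive current in $c_1(K_X)$ with the closed form $\omega^{n-1}$, and the BDPP step) is correct.

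The gap is in part (1) as actually stated, i.e.\ for a general compact \emph{Hermitian} metric, and it sits exactly where you flag it but then set it aside. For the Chern connection of a non-K\"ahler Hermitian metric the spherical average of $H$ is a positive multiple of $s+\hat s$, where $s=g^{i\bar j}g^{k\bar l}R_{i\bar j k\bar l}$ is the Chern scalar curvature and $\hat s=g^{i\bar l}g^{k\bar j}R_{i\bar j k\bar l}$ is the second scalar curvature; these differ in general, and only $s$ (taken for a Gauduchon $\omega$) computes $\deg_\omega K_X^{-1}$. So semipositivity of $H$ yields $s+\hat s\ge 0$, not $s\ge 0$. Worse, the holomorphic sectional curvature is not a conformal invariant, so ``passing to the Gauduchon representative of the conformal class'' forfeits the curvature hypothesis: what you need is that \emph{some} Gauduchon metric has positive total Chern scalar curvature, and the averaging identity for the original metric does not deliver this. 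Closing that gap is precisely the technical content of [Yan], which proceeds through a refined pointwise identity relating $H$ to the (second) Chern--Ricci curvature rather than through the bare average. Your closing remark that the difficulty disappears because the intended application is K\"ahler proves the instance the paper uses, but not the theorem as stated.
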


\begin{theorem}\label{thm2.4}
\begin{enumerate}
    \item If $(\mathcal{E},\Phi)$ is  an effectively parametrized holomorphic family of stable Higgs bundles  on a fixed  Riemann surface $X$ parametrized by a compact complex manifold $S$ such that $(\nabla_{\bar{i}}\eta_i)|_s\neq0$ for some $s\in S$, then the Kodaira dimension of $S$ is $-\infty$.
   \item If $(\mathcal{E},\Phi)$ is  an effectively parametrized holomorphic family of stable Higgs bundles of non-zero degree on a fixed  Riemann surface $X$ parametrized by a projective manifold $S$ such that $(\nabla_{\bar{i}}\eta_i)|_s\neq0$ for each $s\in S$, then $S$ is uniruled.
    \item If $(\mathcal{E},\Phi)$ is  an effectively parametrized holomorphic family of stable Higgs bundles  on a fixed  Riemann surface $X$ parametrized by  a compact K\"ahler manifold $S$, then either $S$ is a torus, or the Kodaira dimension of $S$ is $-\infty$. 
   \item If $\mathcal{E}$ is  an effectively parametrized holomorphic family of stable vector bundles with vanishing Chern classes on a fixed  compact K\"{a}hler manifold $X$ parametrized by a compact complex manifold $S$ such that $(\nabla_{\bar{i}}\eta_i)|_s\neq0$ for some $s\in S$, then the Kodaira dimension of $S$ is $-\infty$. 
\end{enumerate}
\end{theorem}

\begin{proof}
(1) If $X$ is a Riemann surface, the sign of  holomorphic sectional curvature of $G^{\mathrm{WP}}$ is the same as
$$
\mathcal{R}_{i\bar ii\bar i}=2\int_X\Tr(R_{i
\bar i}\Box R_{i\bar i})\omega_X,
$$
which is semipositive. To show (1), we show $\mathcal{R}_{i\bar{i}i\bar{i}} (1\leq i\leq m)$ can not be identically zero by contradiction.  If $\mathcal{R}_{i\bar ii\bar i}|_s=0$ at each point $s\in S$, then we have
\begin{align*}
 \bar\partial_{\mathcal{E}_s}R_{i\bar i}|_s=[\Phi_s,R_{i\bar i}|_s]=0.
\end{align*}
It follows from Lemma \ref{1} (4) that at each point $s\in S$, $(\nabla_{\bar{i}}\eta_i)|_s$ vanishes, which contradicts to our assumption. Then the conclusion follows from Theorem \ref{yu} (1).

(2)  From the proof of (1) and Theorem \ref{yu} (2), we can easily obtain the conclusion.

(3) This claim follows from the fact that a compact K\"ahler manifold with identically zero holomorphic sectional curvature is a torus \cite{Yan}. 

(4) By the assumption on vanishing of Chern classes, we have $R_{\alpha\bar \beta}|_s=0$ for any $1\leq\alpha,\beta\leq n$ and any $s\in S$. Then we have
   \begin{align*}
     (\bar\partial_{\mathcal{E}_s})^{\dagger_{\tilde{h}_s}}([R_{i\bar \beta},R_{i\bar \gamma}]d\bar z^\beta\wedge d\bar z^\gamma)|_s
     &=-2(g^{\bar \beta \alpha}\nabla_\alpha[R_{i\bar \beta},R_{i\bar \gamma}] d\bar z^\gamma)|_s\\
     &=-2(g^{\bar \beta \alpha}([\nabla_\alpha R_{i\bar \beta}, R_{i\bar \gamma}]+[R_{i\bar \beta},\nabla_\alpha R_{i\bar \gamma}]) d\bar z^\gamma)|_s\\
      &=-2(g^{\bar \beta \alpha}([\nabla_i R_{\alpha\bar \beta}, R_{i\bar \gamma}]+[R_{i\bar \beta},\nabla_i R_{\alpha\bar \gamma}]) d\bar z^\gamma)|_s\\
     &=0.
   \end{align*}
   The identity in Lemma \ref{1} (1) is locally written as \cite{BS}
   $$
   \bar\partial_{\mathcal{E}_s}((\nabla_iR_{i\bar \beta})d\bar z^\beta)|_s+([R_{i\bar \beta},R_{i\bar \gamma}]d\bar z^\beta\wedge d\bar z^\gamma)|_s=0,
   $$
  then the operator $(\bar\partial_{\mathcal{E}_s})^{\dagger_{\tilde{h}_s}}$ acting on both sides  gives rise to $(\bar\partial_{\mathcal{E}_s})^{\dagger_{\tilde{h}_s}}\bar\partial_{\mathcal{E}_s}((\nabla_iR_{i\bar \beta})d\bar z^\beta)|_s=0$, which implies
$$
\bar\partial_{\mathcal{E}_s}((\nabla_iR_{i\bar \beta})d\bar z^\beta)|_s=0,
$$  
namely $[\eta_i\wedge\eta_i]|_s=([R_{i\bar \beta},R_{i\bar \gamma}]d\bar z^\beta\wedge d\bar z^\gamma)|_s=0$.
   Therefore, again by Biswas--Schumacher's curvature formula, we have
$$
\mathcal{R}_{i\bar ii\bar i}=2\int_X\Tr(R_{i
\bar i}\Box R_{i\bar i})\omega_X\geq0.
$$
Then the conclusions also follow from the proof of (1). 
\end{proof}

Applying the above theorem to the moduli space of vector bundles on a fixed Riemann surface $X$, we have the following corollary.

\begin{corollary}\label{modulisp}
Assume $X$ is a Riemann surface  of genus $g\geq 1$, and let $r$ and $d$ be two positive integers such that they are coprime. Denote by $N_X(r,d)$ the coarse  moduli space  of semistable  vector bundles of rank $r$ and degree $d$ on $X$. Then $N_X(r,d)$ is either an abelian variety, or of Kodaira dimension $-\infty$.
\end{corollary}

\begin{proof}
 When $r$ and $d$ are coprime, it is known that $N_X(r,d)$  is a smooth projective variety and is a fine moduli space such that there exists a (global) universal vector bundle $\mathcal{E}$  over $X\times N_X(r,d)$. Then the conclusion follows from Theorem \ref{thm2.4} (3). 
\end{proof}

\begin{remark}
Assume $g\geq2, r\geq2$, let $L$ be a fixed line bundle of degree $d$ on $X$, and let $N_X(r,L)$ the coarse moduli space of semistable vector bundles of rank $r$ with determinant $L$ on $X$. It is known that $N_X(r,L)$ is uniruled (since $N_X(r,L)$ is simply-connected, the above corollary only implies it has Kodaira dimension $-\infty$), hence $N_X(r,d)$  contains a free rational curve $C$ whose normal bundle $N_C$ is semipositive. From the adjunction formula it follows that $-K_X$ is positive on the curve $C$, where $K_X$ denotes the canonical line bundle of $X$.  On the other hand,   by a result of Drezet and  Narasimhan \cite{DN}, the Picard group $\mathrm{Pic}(N_X(r,L))$ is isomorphic to $\mathbb{Z}$, which implies $-K_X$ is ample. Therefore, $N_X(r,L)$ is a Fano variety. By determinant map, $N_X(r,d)$ is a fibration over a Picard variety of deg $d$ with Fano fibers. However, this result does not immediately imply the Kodaira dimension of $N_X(r,d)$ is $-\infty$.
\end{remark}

In general, we guess Theorem \ref{thm2.4} (3) also holds for higher dimensional $X$, namely the following conjecture is proposed.

\begin{conjecture}
If $(\mathcal{E},\Phi)$ is  an effectively parametrized holomorphic family of stable Higgs bundles  on a fixed compact  K\"{a}hler manifold $X$ parametrized by a simply-connected compact complex manifold $S$, then the Kodaira dimension of $S$ is $-\infty$. 
\end{conjecture}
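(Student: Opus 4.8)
The plan is to argue as in Theorem~\ref{thm2.4}: to produce on $S$ a metric whose holomorphic sectional curvature is semipositive and not identically zero (so that the Kodaira dimension of $S$ is $-\infty$ by Theorem~\ref{yu}(1)), and strictly positive when $S$ is projective (so that $S$ is uniruled by Theorem~\ref{yu}(2)). The genuinely new feature in the higher-dimensional Higgs setting is that the seminegative term
\[
\int_X\Tr\big([\eta_i\wedge\eta_i]\wedge G([\eta^{*_h}_{\bar i}\wedge\eta^{*_h}_{\bar i}])\big)\wedge\frac{\omega_X^{n-2}}{(n-2)!}
\]
of the Biswas--Schumacher formula~\eqref{1} no longer drops out: for $n=1$ it is absent for dimensional reasons, and for a flat family with vanishing Chern classes it vanishes via the identity $[\eta_i\wedge\eta_i]=0$ established in part~(2) of Theorem~\ref{thm2.4}, but in general it is present and competes with the semipositive term $2\int_X\Tr(R_{i\bar i}\Box R_{i\bar i})\frac{\omega_X^n}{n!}$.

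The first thing I would attempt is a direct domination of these two terms. By Lemma~\ref{4m}(4) one has $\mathbf{d}R_{i\bar i}=\nabla_{\bar i}\eta_i$, so the semipositive term equals (up to a positive factor) $\|\nabla_{\bar i}\eta_i\|^2_{L^2}$, while, writing $\psi_i=G[\eta_i\wedge\eta_i]$, the seminegative term is $-\langle \Box\psi_i,\psi_i\rangle=-(\|\mathbf{d}\psi_i\|^2+\|\mathbf{d}^{\dagger_{\tilde h}}\psi_i\|^2)$. The goal would be a Bochner--Kodaira/Weitzenb\"ock estimate bounding $\langle[\eta_i\wedge\eta_i],G[\eta_i\wedge\eta_i]\rangle$ by $\|\nabla_{\bar i}\eta_i\|^2_{L^2}$, exploiting the harmonicity $\mathbf{d}\eta_i=\mathbf{d}^{\dagger_{\tilde h}}\eta_i=0$ together with the structural relations of Lemma~\ref{4m}. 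I expect this to be the crux and also the main obstacle: no unconditional inequality of this shape can hold, since for canonically polarized families the analogous negative term dominates and produces hyperbolicity; one must therefore use the specific algebraic structure of the bracket $[\eta_i\wedge\eta_i]$ of harmonic Kodaira--Spencer forms, presumably showing that its nonharmonic part is controlled by $\nabla_{\bar i}\eta_i$.

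If the Hermitian estimate resists, the more promising route is to replace $G^{\mathrm{WP}}$ by the smooth strongly pseudoconvex Finsler metric $F_\kappa$ of Section~\ref{sec4} and to sign-analyze its holomorphic sectional curvature through the explicit formula of Theorem~\ref{finslercur}. At $\kappa=0$ that formula recovers the normalized Weil--Petersson holomorphic sectional curvature, so nothing is gained there; the point is that the final block $\kappa^2(\cdots)^{-5/2}|\cdots|^2$ is manifestly nonnegative, while the $\kappa$-linear corrections are Green-operator pairings amenable to Cauchy--Schwarz and the boundedness of $G$. The strategy is to regroup the terms of Theorem~\ref{finslercur} into squares plus controllable errors and to choose $\kappa$ in a suitable range so that the positive contributions---chiefly $\int_X h(\eta_i^{\dagger_{\tilde h}}\eta_i,G\eta_i^{\dagger_{\tilde h}}\eta_i)\frac{\omega^n}{n!}$ together with the $\kappa^2$-block---dominate the negative and indefinite ones, yielding $K_{F_\kappa}(\partial/\partial s^i)>0$ (respectively $\geq 0$ and $\not\equiv 0$), uniformly over $S$ and over all holomorphic directions.

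The final step is to invoke a Finsler analogue of Theorem~\ref{yu}: a compact complex manifold carrying a smooth strongly pseudoconvex Finsler metric of semipositive, not identically zero, holomorphic sectional curvature has Kodaira dimension $-\infty$, and a projective one with positive holomorphic sectional curvature is uniruled. Such results are available in the negatively curved direction, and the positively curved counterpart would have to be established or cited. The principal obstacle remains the uniform sign analysis of Theorem~\ref{finslercur}: unlike the two cases already settled, there is no single vanishing that annihilates the obstruction, so proving dominance of the positive terms requires genuinely quantitative control of $G$ applied to the brackets of the $\eta_i$, uniform in the base point and in the tangent direction.
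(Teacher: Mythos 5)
There is no proof of this statement in the paper to compare against: the authors explicitly label it a \emph{conjecture}, introduced with the words ``we guess Theorem \ref{thm2.4} also holds for higher dimensional $X$.'' Your proposal does not close it either --- it is a strategy outline in which every decisive step is flagged as unresolved by you yourself. The crux, as you correctly identify, is that for general $n$-dimensional $X$ the seminegative term $-\int_X\Tr([\eta_i\wedge\eta_i]\wedge G([\eta^{*_h}_{\bar i}\wedge\eta^{*_h}_{\bar i}]))\wedge\frac{\omega_X^{n-2}}{(n-2)!}$ in formula \eqref{1} survives, and the two cases actually settled in Theorem \ref{thm2.4} are settled precisely because this term is annihilated outright (absent for $n=1$; killed by $[R_{i\bar\beta},R_{i\bar\gamma}]d\bar z^\beta\wedge d\bar z^\gamma=0$ in the flat case), not because it is dominated by the positive term. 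Your proposed Bochner--Kodaira estimate bounding $\langle[\eta_i\wedge\eta_i],G[\eta_i\wedge\eta_i]\rangle$ by $\|\nabla_{\bar i}\eta_i\|^2_{L^2}$ is never derived, and you concede that no unconditional inequality of this shape can hold; the ``specific algebraic structure'' that is supposed to rescue it is not exhibited. Note also that even semipositivity would not suffice for the uniruledness claim, which via Theorem \ref{yu}(2) needs strict positivity; in the settled cases strictness came from stability forcing the harmonic projection of $R_{i\bar j}$ to be a multiple of the identity together with effective parametrization, and that argument does not obviously survive the presence of a nonvanishing negative term.

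The fallback via the Finsler metric $F_\kappa$ has the same character: the sign analysis of the formula in Theorem \ref{finslercur} is not carried out (and several of the $\kappa$-linear terms there, e.g. the $-4\int_X\tilde h(G[\eta_i\wedge\mathbf{d}G\eta_i^{\dagger_{\tilde h}}\eta_i],[\eta_i\wedge\mathbf{d}G\eta_i^{\dagger_{\tilde h}}\eta_i])\frac{\omega^n}{n!}$ and $-\int_X\tilde h(G^2[[\eta_i\wedge\eta_i]\wedge\eta_i],[[\eta_i\wedge\eta_i]\wedge\eta_i])\frac{\omega^n}{n!}$ terms, are manifestly nonpositive, so ``choosing $\kappa$ in a suitable range'' is not obviously possible uniformly over $S$). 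Moreover Theorem \ref{yu} as stated applies to Hermitian metrics, and the Finsler analogue in the semipositive/positive direction that your last step invokes is acknowledged to be something that ``would have to be established or cited'' --- i.e., it is an additional unproved ingredient. In short, the statement remains open, and your proposal identifies the right obstacles without overcoming any of them.
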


\section{Curvature on Direct Image Bundles}\label{sec3}

In this section, we will calculate the curvature formula on the locally free higher direct image sheaf $\mathbb{R}^d\pi_*\mathbf{H}_{\mathrm{r}}$ over $S\backslash Z^{(d)}$, where the notations for these objects can be found in Sect. \ref{sec1}. Our main aim of this section is to prove Theorem \ref{thm1}.

As a warm-up, we firstly  calculate the corresponding curvature tensor for $(\pi_*\mathbf{H}_\mathrm{r},H)$. Let $s_0$ be a point lying in  $S\backslash Z^{(0)}$, we choose a holomorphic trivialization $\{t_1,\cdots, t_{r_0}\}$ of $\pi_*\mathbf{H}_\mathrm{r}$ over an open  neighborhood $U\subseteq S\backslash Z^{(0)}$ containing $s_0$, and choose a normal  coordinate $\{s^1,\cdots,s^m\}$ for $U$ so that the $L^2$-metric $H$ satisfying
$$
\frac{\partial H_{a\bar b}}{\partial s^i}\Big|_{s_0}=0
$$
for any $1\leq i\leq m, 1\leq a,b\leq r_0$, where $H_{a\bar b}=H(t_a,t_b)$. Note that $\{t_1|_s,\cdots, t_{r_0}|_s\}$ forms a basis of $\mathbb{H}^0(X,\mathbf{H}_s)$. Therefore, the curvature tensor $\mathfrak{R}$ for $(\pi_*\mathbf{H}_\mathrm{r},H)$ at $s_0$ is given by
\begin{align*}
  \mathfrak{R}_{a\bar b i\bar j}|_{s_0}&=-\frac{\partial^2 H_{a\bar b}}{\partial s^i\partial\bar s^j}\Big|_{s_0}\\
  &=-\Big(\int_X h(\nabla_{\bar j}\nabla_it_a,t_b)\frac{\omega_X^n}{n!}\Big)\Big|_{s_0}-\Big(\int_X h(\nabla_it_a,\nabla_jt_b)\frac{\omega_X^n}{n!}\Big)\Big|_{s_0}.
\end{align*}

\begin{lemma}\label{vz}
We have the following equality at $s_0$
$$
\Big(\int_X h(\nabla_it_a,\nabla_jt_b)\frac{\omega_X^n}{n!}\Big)\Big|_{s_0}=\Big(\int_X \tilde h\big(G\big(\eta_i(t_a)\big),\eta_j(t_b)\big)\frac{\omega_X^n}{n!}\Big)\Big|_{s_0}.$$
\end{lemma}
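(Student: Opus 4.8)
The plan is to rewrite $\nabla_i t_a|_{s_0}$ through the Green operator applied to $\eta_i t_a$, so that the left-hand integral is converted into the right-hand one by a single integration by parts. Three ingredients are needed: a Kodaira--Spencer type identity $\mathbf{d}(\nabla_i t_a)=-\eta_i t_a$; the observation that the normal-coordinate choice forces $\nabla_i t_a|_{s_0}$ to be $L^2$-orthogonal to the harmonic sections; and the Hodge decomposition for the elliptic complex $\mathbf{D}_\mathrm{r}|_s$. I would first derive the identity $\mathbf{d}(\nabla_i t_a)=-\eta_i t_a$. Since $t_a|_s$ is a harmonic representative of $\mathbb{H}^0(X,\mathbf{D}_\mathrm{r}|_s)$ we have $\mathbf{d}t_a=\bar\partial_{\mathcal{E}}t_a+\Phi t_a=0$; comparing bidegrees gives $\bar\partial_{\mathcal{E}}t_a=0$ and $\Phi t_a=0$ for every $s$. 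Applying $\nabla_i$ to each and using the curvature commutator $\nabla_i(\bar\partial_{\mathcal{E}}\cdot)-\bar\partial_{\mathcal{E}}(\nabla_i\cdot)=(R_{i\bar\alpha}d\bar z^\alpha)\cdot$ together with $\nabla_i(\Phi\cdot)-\Phi(\nabla_i\cdot)=(\nabla_i\Phi_\alpha dz^\alpha)\cdot$ yields $\bar\partial_{\mathcal{E}}(\nabla_i t_a)=-(R_{i\bar\alpha}d\bar z^\alpha)t_a$ and $\Phi(\nabla_i t_a)=-(\nabla_i\Phi_\alpha dz^\alpha)t_a$; adding these and recalling $\eta_i=R_{i\bar\alpha}d\bar z^\alpha+\nabla_i\Phi_\alpha dz^\alpha$ gives $\mathbf{d}(\nabla_i t_a)=-\eta_i t_a$.

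Next I would establish the orthogonality at $s_0$. Differentiating $H_{a\bar b}=\int_X h(t_a,t_b)\frac{\omega_X^n}{n!}$ and using metric compatibility of the Chern connection of $(\mathcal{E},h)$ gives $\partial_i H_{a\bar b}=\int_X h(\nabla_i t_a,t_b)\frac{\omega_X^n}{n!}+\int_X h(t_a,\nabla_{\bar i}t_b)\frac{\omega_X^n}{n!}$. Because $t_b$ is a holomorphic section of $\pi_*\mathbf{D}_\mathrm{r}$, the harmonic projection of $\nabla_{\bar i}t_b$ vanishes, and since $t_a$ is harmonic the second integral is zero; hence $\partial_i H_{a\bar b}=\int_X h(\nabla_i t_a,t_b)\frac{\omega_X^n}{n!}$. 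The normal-coordinate hypothesis $\partial_i H_{a\bar b}|_{s_0}=0$ then says $\nabla_i t_a|_{s_0}$ is $L^2$-orthogonal to every $t_b$, i.e. to the whole harmonic space $\mathbb{H}^0(X,\mathbf{D}_\mathrm{r}|_{s_0})$.

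Finally I would combine these. As $\nabla_i t_a$ is a section (a $0$-form) whose harmonic part vanishes at $s_0$, the decomposition $u=\mathcal{H}u+\mathbf{d}\mathbf{d}^{\dagger_{\tilde h}}Gu+\mathbf{d}^{\dagger_{\tilde h}}\mathbf{d}Gu$ collapses (the middle term dies because $\mathbf{d}^{\dagger_{\tilde h}}$ annihilates $0$-forms) to $\nabla_i t_a=\mathbf{d}^{\dagger_{\tilde h}}G\,\mathbf{d}(\nabla_i t_a)=-\mathbf{d}^{\dagger_{\tilde h}}G(\eta_i t_a)$, and likewise $\nabla_j t_b=-\mathbf{d}^{\dagger_{\tilde h}}G(\eta_j t_b)$. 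Substituting, integrating by parts once, and using that $\eta_j t_b=-\mathbf{d}(\nabla_j t_b)$ is both $\mathbf{d}$-closed and $\mathbf{d}$-exact (so $\mathbf{d}G(\eta_j t_b)=0$ and hence $\mathbf{d}\mathbf{d}^{\dagger_{\tilde h}}G(\eta_j t_b)=\eta_j t_b$), I obtain
\[
\int_X h(\nabla_i t_a,\nabla_j t_b)\frac{\omega_X^n}{n!}=\int_X\tilde h\big(G(\eta_i t_a),\mathbf{d}\mathbf{d}^{\dagger_{\tilde h}}G(\eta_j t_b)\big)\frac{\omega_X^n}{n!}=\int_X\tilde h\big(G(\eta_i t_a),\eta_j t_b\big)\frac{\omega_X^n}{n!},
\]
which is the claimed equality.

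I expect the main obstacle to be the orthogonality step, and in particular making precise that holomorphicity of $t_b$ as a section of the direct image forces the harmonic component of $\nabla_{\bar i}t_b$ to vanish; this rests on identifying the Chern connection of $(\pi_*\mathbf{D}_\mathrm{r},H)$ with the harmonic projection of the ambient connection of $(\mathcal{E},h)$ along $S$. Once this is granted the remainder is routine Hodge theory, with only minor care required in the curvature-commutator bookkeeping and in checking that $\mathbf{d}^2=0$ on the Higgs complex, so that $\eta_j t_b$ is genuinely $\mathbf{d}$-closed.
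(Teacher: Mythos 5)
Your proposal is correct and follows essentially the same route as the paper: the vanishing of the harmonic projection of $\nabla_i t_a$ at $s_0$ (from holomorphicity of the frame and the normal--coordinate choice), the commutator identity $\mathbf{d}(\nabla_i t_a)=-\eta_i t_a$, and the Hodge decomposition $\nabla_i t_a=\mathbf{d}^{\dagger_{\tilde h}}G\,\mathbf{d}(\nabla_i t_a)$, which is just the paper's $\nabla_i t_a=\Box G\nabla_i t_a$ with the $\mathbf{d}\mathbf{d}^{\dagger_{\tilde h}}$-term already discarded. The only cosmetic difference is that you substitute the Green-operator expression for both factors and then undo one of them, where the paper integrates by parts directly against $\nabla_j t_b$.
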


\begin{proof}
Due to $\{t_a\}$ being  a holomorphic frame  over $U$, i.e. $ \nabla_{\bar j}t_a=0$ for any $1\leq a\leq r_0, 1\leq j\leq m$,
 fixing the indices $i,a$, we have
$$
\Big(\int_Xh((\nabla_it_a),t_b)\frac{\omega_X^n}{n!}\Big)\Big|_{s_0}=0
$$
for any $1\leq b\leq r_0$, which implies that the harmonic projection $P((\nabla_it_a)|_{s_0})$ of $(\nabla_it_a)|_{s_0}$ with respect to the Laplacian $\Box|_{s_0}$ on $(\mathcal{E}_{s_0},\Phi_{s_0},h_{s_0})$ vanishes.
It follows that
\begin{align*}
 \Big(\int_Xh(\nabla_it_a,\nabla_jt_b)\frac{\omega_X^n}{n!}\Big)\Big|_{s_0}&=\Big(\int_Xh(\Box G\nabla_it_a,\nabla_jt_b)\frac{\omega_X^n}{n!}\Big)\Big|_{s_0}\\
 &=\int_X\tilde h_{s_0}(\mathbf{d}|_{s_0} (G\nabla_it_a)|_{s_0},\mathbf{d}|_{s_0}(\nabla_jt_b)|_{s_0})\frac{\omega_X^n}{n!}\\
 &\ \ \ \ +\int_X\tilde h_{s_0}(\mathbf{d}^{\dagger_{\tilde{h}}}|_{s_0} (G\nabla_it_a)|_{s_0},\mathbf{d}^{\dagger_{\tilde{h}}}|_{s_0}(\nabla_jt_b)|_{s_0})\frac{\omega_X^n}{n!}\\
 &=\int_X\tilde h_{s_0}(G|_{s_0}\mathbf{d}|_{s_0}( \nabla_it_a)|_{s_0},\mathbf{d}|_{s_0}(\nabla_jt_b)|_{s_0})\frac{\omega_X^n}{n!}.
\end{align*}
By the Ricci identity, we calculate
\begin{align*}
 \mathbf{d}|_{s_0} (\nabla_it_a)|_{s_0}=&-(\nabla_i(\Phi(t_a)))|_{s_0}-(R_{i\bar \alpha}d\bar z_\alpha (t_a))|_{s_0}+\Phi_{s_0}((\nabla_it_a)|_{s_0})\\
 =&-\eta_i(t_a)|_{s_0},
\end{align*}
which yields  that
$$
\Big(\int_X h(\nabla_it_a,\nabla_jt_b)\frac{\omega_X^n}{n!}\Big)\Big|_{s_0}=\Big(\int_X\tilde h\big(G\big(\eta_i (t_a)\big),\eta_j( t_b)\big)\frac{\omega_X^n}{n!}\Big)\Big|_{s_0}.
$$
 The desired equality is obtained.
\end{proof}

\begin{lemma}\label{vc}
We have the following equality at $s_0$
  \begin{align*}
  \Big(\int_X h(\nabla_{\bar j}\nabla_it_a,t_b)\frac{\omega_X^n}{n!}\Big)\Big|_{s_0}=&-\bigg(\frac{1}{r\mathrm{Vol}(X,\omega_X)}\int_X\Tr({R}_{i\bar j}|_{s_0})\frac{\omega_X^n}{n!}\bigg)(H_{a\bar b}|_{s_0})\\
 &+\Big(\int_X h\big(G\big((\eta_j^{\dagger_{\tilde h}}\eta_i)(t_a)\big),t_b\big)\frac{\omega_X^n}{n!}\Big)\Big|_{s_0}.
  \end{align*}
\end{lemma}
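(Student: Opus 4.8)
The plan is to evaluate the term $\int_X h(\nabla_{\bar j}\nabla_i t_a,t_b)\frac{\omega_X^n}{n!}$ by first computing the endomorphism $\nabla_{\bar j}\nabla_i t_a$ pointwise on the fibre $X$ and then pairing with $t_b$. Since $\{t_a\}$ is a holomorphic frame of $\pi_*\mathbf{D}_\mathrm{r}$, each $t_a$ is a genuine holomorphic section of $\mathcal{E}$ over $\tilde X$ lying in $\ker\Phi$ fibrewise; in particular $\nabla_{\bar j}t_a=0$ as a section of $\mathcal{E}$. Using the commutation relation for the Chern connection of $h$ on $\mathcal{E}$ over $\tilde X$, I would obtain
\begin{align*}
\nabla_{\bar j}\nabla_i t_a=\nabla_i\nabla_{\bar j}t_a+[\nabla_{\bar j},\nabla_i]t_a=-R_{i\bar j}(t_a),
\end{align*}
so that everything is reduced to understanding the $\End(\mathcal{E})$-valued section $R_{i\bar j}$ acting on $t_a$.

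The second step is a Hodge decomposition of $R_{i\bar j}$ on the fibre. Regarding $R_{i\bar j}$ as an $\End(\mathcal{E})$-valued $0$-form, one has $\mathbf{d}^{\dagger_{\tilde h}}R_{i\bar j}=0$ for degree reasons, whence $\Box R_{i\bar j}=\mathbf{d}^{\dagger_{\tilde h}}\mathbf{d}R_{i\bar j}$. Inserting Lemma \ref{4m}(4), i.e.\ $\mathbf{d}R_{i\bar j}=\nabla_{\bar j}\eta_i$, followed by Lemma \ref{4m}(2), i.e.\ $\mathbf{d}^{\dagger_{\tilde h}}\nabla_{\bar j}\eta_i=-\eta_j^{\dagger_{\tilde h}}\eta_i$, yields
\begin{align*}
\Box R_{i\bar j}=-\eta_j^{\dagger_{\tilde h}}\eta_i.
\end{align*}
Applying the Green operator on $\End(\mathcal{E})$-valued forms gives $R_{i\bar j}=P(R_{i\bar j})+G\Box R_{i\bar j}=P(R_{i\bar j})-G(\eta_j^{\dagger_{\tilde h}}\eta_i)$, where $P$ denotes the orthogonal projection onto the $\Box$-harmonic $\End(\mathcal{E})$-valued $0$-forms.

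To identify $P(R_{i\bar j})$ I would invoke stability: a $\Box$-harmonic $\End(\mathcal{E})$-valued $0$-form is an element of $\mathbb{H}^0(X,\mathbf{H}_s)$, which for a stable (hence simple) Higgs bundle is precisely $\mathbb{C}\cdot\mathrm{Id}_{\mathcal{E}}$. Thus $P(R_{i\bar j})=c_{i\bar j}\,\mathrm{Id}_{\mathcal{E}}$ with $c_{i\bar j}=\langle R_{i\bar j},\mathrm{Id}\rangle/\|\mathrm{Id}\|^2$ in the $L^2$ inner product $\langle\varphi,\psi\rangle=\int_X\tilde h(\varphi,\psi)\frac{\omega_X^n}{n!}$. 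Since $\tilde h(R_{i\bar j},\mathrm{Id})=\Tr(R_{i\bar j})$ and $\int_X\tilde h(\mathrm{Id},\mathrm{Id})\frac{\omega_X^n}{n!}=r\,\mathrm{Vol}(X,\omega_X)$, this gives $c_{i\bar j}=\frac{1}{r\mathrm{Vol}(X,\omega_X)}\int_X\Tr(R_{i\bar j})\frac{\omega_X^n}{n!}$. Substituting $-R_{i\bar j}t_a=-c_{i\bar j}t_a+G(\eta_j^{\dagger_{\tilde h}}\eta_i)(t_a)$ into the integral and using $\int_X h(t_a,t_b)\frac{\omega_X^n}{n!}=H_{a\bar b}$ then produces the claimed formula.

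The main delicate points are, first, the legitimacy of the reduction $\nabla_{\bar j}\nabla_i t_a=-R_{i\bar j}t_a$, which requires $t_a$ to be holomorphic in the $S$-directions as well, so that $\nabla_{\bar j}t_a$ vanishes pointwise and not merely after harmonic projection; this rests on the interpretation of $\pi_*\mathbf{D}_\mathrm{r}$ as the pushforward of $\ker\Phi$. Second, the identification of the harmonic part of $R_{i\bar j}$ with a multiple of the identity, together with the exact value of $c_{i\bar j}$, relies on the simplicity of stable Higgs bundles, and the normalization factor $r\,\mathrm{Vol}(X,\omega_X)=\int_X\Tr(\mathrm{Id}_{\mathcal{E}})\frac{\omega_X^n}{n!}$ must be tracked carefully to match the stated constant.
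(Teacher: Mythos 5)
Your proposal is correct and follows essentially the same route as the paper: reduce $\nabla_{\bar j}\nabla_i t_a$ to $-R_{i\bar j}(t_a)$ via holomorphicity and the Ricci identity, decompose $R_{i\bar j}=P(R_{i\bar j})+G\Box R_{i\bar j}$ with $\Box R_{i\bar j}=-\eta_j^{\dagger_{\tilde h}}\eta_i$ from Lemma \ref{4m}(4) and (2), and identify $P(R_{i\bar j})=c_{i\bar j}\mathrm{Id}$ by simplicity of the stable Higgs bundle. Your version is in fact slightly more explicit than the paper's, which only cites Lemma \ref{4m}(4) and leaves the use of (2) and the computation of $c_{i\bar j}$ implicit.
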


\begin{proof} 
Again by the holomorphicity of $t_a$ and  by the Ricci identity, we have
$$
(\nabla_{\bar j}\nabla_it_a)|_{s_0}=-{R}_{i\bar j}(t_a)|_{s_0}.
$$
Since $(\mathcal{E}_{s},\Phi_{s})$ is a stable Higgs bundle, the harmonic projection $P({R}_{i\bar j}|_{s})$ of $R_{i\bar j}|_{s}$ with respect to the induced Laplacian $\Box|_{s}$ on $\End(\mathcal{E}_s)$ must satisfy
$$
P({R}_{i\bar j}|_{s})=c_{i\bar j}|_s\cdot\mathrm{Id}_{\mathcal{E}_s}
$$
where  $c_{i\bar j}ds^i\wedge d\bar s^j$ is a (1,1)-form over $S$ given by
$$
c_{i\bar j}=\frac{1}{r\mathrm{Vol}(X,\omega_X)}\int_X\Tr(R_{i\bar j})\frac{\omega_X^n}{n!}.
$$
 Therefore, we arrive at
\begin{align*}
  \Big(\int_X h(\nabla_{\bar j}\nabla_it_a,t_b)\frac{\omega_X^n}{n!}\Big)\Big|_{s_0}&=-\Big(\int_X h({R}_{i\bar j}(t_a),t_b)\frac{\omega_X^n}{n!}\Big)\Big|_{s_0}\\
  &=-\int_X h_{s_0}([P({R}_{i\bar j}|_{s_0})+(G\Box{R}_{i\bar j})|_{s_0}](t_a|_{s_0}),t_b|_{s_0})\frac{\omega_X^n}{n!}\\
  &=-(c_{i\bar j}H_{a\bar b})|_{s_0}+\Big(\int_X h\big(G\big((\eta_j^{\dagger_{\tilde h}}\eta_i)(t_a)\big),t_b\big)\frac{\omega_X^n}{n!}\Big)\Big|_{s_0},
\end{align*}
where we apply  the identity in Lemma \ref{1} (4) for the third equality.
\end{proof}

Combining the above two lemmas together leads to the following theorem as an analog of Theorem 1 in \cite{TW}

\begin{theorem} \label{rt} 
The curvature tensor $\mathfrak{R}$ of $(\pi_*\mathbf{H}_\mathrm{r},H)$ over $S\backslash Z^{(0)}$ is given by
\begin{align*}
 \mathfrak{R}_{a\bar b i\bar j}=\ &\bigg(\frac{1}{r\mathrm{Vol}(X,\omega_X)}\int_X\Tr(R_{i\bar j})\frac{\omega_X^n}{n!}\bigg)\cdot H_{a\bar b}-\int_X h\big(G\big((\eta_j^{\dagger_{\tilde h}}\eta_i)(t_a)\big),t_b\big)\frac{\omega_X^n}{n!}\\
 &-\int_X \tilde h\big(G(\eta_i(t_a)),\eta_j(t_b)\big)\frac{\omega_X^n}{n!}.
\end{align*}
\end{theorem}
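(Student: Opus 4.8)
The plan is to assemble Theorem \ref{rt} directly from the two preceding lemmas, since all of the genuinely analytic input—the Ricci identity, the Hermitian--Einstein condition, and the stability of $(\mathcal{E}|_s,\Phi|_s)$—has already been extracted there. The starting point is the standard second-order expression for the Chern curvature of a Hermitian holomorphic bundle, $\mathfrak{R}_{a\bar b i\bar j}=-\partial_i\partial_{\bar j}H_{a\bar b}+H^{c\bar d}(\partial_iH_{a\bar d})(\partial_{\bar j}H_{c\bar b})$. In the normal coordinates chosen at $s_0$, where $\partial_iH_{a\bar b}|_{s_0}=0$, the quadratic correction drops out, so $\mathfrak{R}_{a\bar b i\bar j}(s_0)=-\partial_i\partial_{\bar j}H_{a\bar b}|_{s_0}$; this was already split into the two integrals $-\int_X h(\nabla_{\bar j}\nabla_it_a,t_b)\frac{\omega_X^n}{n!}$ and $-\int_X h(\nabla_it_a,\nabla_jt_b)\frac{\omega_X^n}{n!}$ evaluated at $s_0$, which is exactly the input these lemmas are designed to simplify.

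First I would substitute Lemma \ref{vc} into the first integral and Lemma \ref{vz} into the second. Lemma \ref{vc} rewrites $\int_X h(\nabla_{\bar j}\nabla_it_a,t_b)\frac{\omega_X^n}{n!}$ as $-c_{i\bar j}H_{a\bar b}+\int_X h(G(\eta_j^{\dagger_{\tilde h}}\eta_i)(t_a),t_b)\frac{\omega_X^n}{n!}$, where $c_{i\bar j}=\frac{1}{r\mathrm{Vol}(X,\omega_X)}\int_X\Tr(R_{i\bar j})\frac{\omega_X^n}{n!}$, while Lemma \ref{vz} rewrites the second integral as $\int_X\tilde h(G(\eta_i(t_a)),\eta_j(t_b))\frac{\omega_X^n}{n!}$. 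After distributing the overall minus sign, the term $-(-c_{i\bar j}H_{a\bar b})=c_{i\bar j}H_{a\bar b}$ produces precisely the trace contribution in the statement, and the two remaining integrals reappear with the displayed minus signs. At the level of the pointwise identity at $s_0$ no further manipulation is required; this step is purely bookkeeping of signs.

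The one place I expect real care to be needed is the passage from this pointwise identity, derived at $s_0$ in a special frame, to the global tensorial formula claimed in the theorem. The computation used two normalizations: the holomorphicity of the trivialization $\{t_a\}$, giving $\nabla_{\bar j}t_a=0$, and the vanishing of the first derivatives $\partial_iH_{a\bar b}|_{s_0}$. I would therefore check that the right-hand side of the asserted formula is expressed entirely through objects that are intrinsic and independent of these choices—namely $H_{a\bar b}$, the harmonic representatives $\eta_i$, the relative curvature $R_{i\bar j}$, and the Green operator $G=\Box^{-1}$. Since each such object is well defined in any frame and at any point, the identity, once established at the arbitrary point $s_0\in S\backslash Z^{(0)}$, propagates to all of $S\backslash Z^{(0)}$, which yields the stated formula and completes the argument.
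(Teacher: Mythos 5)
Your proposal is correct and follows essentially the same route as the paper: the paper likewise derives $\mathfrak{R}_{a\bar b i\bar j}(s_0)=-\partial_i\partial_{\bar j}H_{a\bar b}|_{s_0}$ in a normal holomorphic frame, splits it into the two integrals, and obtains the theorem by direct substitution of Lemma \ref{vz} and Lemma \ref{vc}. The only addition in your write-up is the explicit remark that the resulting formula is frame-independent and hence global, which the paper leaves implicit.
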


Next we calculate the curvature tensor for higher-order direct image sheaf  $\mathbb{R}^d\pi_*\mathbf{H}_\mathrm{r}$. As before, for a point $s_0\in S\backslash Z^{(d)}$, we choose a holomorphic trivialization $\{t_1,\cdots, t_{r_d}\}$ of $\mathbb{R}^d\pi_*\mathbf{H}_\mathrm{r}$ over an open  neighborhood $U\subseteq S\backslash Z^{(d)}$ , where each $t_a$ lies in $\mathcal{A}^d_{X\times U}(\mathcal{E})$ and satisfies $\mathbf{d}t_a=0$. A key observation similar to Lemma 2 in \cite{Sch} and Lemma 2.1 in \cite{m2}
is that for $\chi\in \mathcal{A}^d_{X\times U}(\mathcal{E})$ satisfying  $\mathbf{d}\chi=0$ there exists a form $\theta\in \mathcal{A}^{d-1}_{X\times V}(\mathcal{E})$ on some open neighborhood $V\subseteq U$ of $s_0$ such that $(\chi+\mathbf{d}\theta)|_{s}=P(\chi|_s)$ for every $s\in V$. As a consequence, we can assume the restriction $t_a|_{s}$
is harmonic for any $s\in U$ and $1\leq a\leq r_d$. The proof of Lemma \ref{vz} and Lemma \ref{vc} continue to wok  for the case of  higher-order direct image sheaf by minor modifications, therefore, we have  the following theorem as an analog of Theorem 1 in \cite{m2}.

\begin{theorem}\label{curfor}
The curvature tensor $\mathfrak{R}$ of $(\mathbb{R}^d\pi_*\mathbf{H}_\mathrm{r},H)$ over $S\backslash Z^{(d)}$ is given by
\begin{align*}
 \mathfrak{R}_{a\bar b i\bar j}=\ &\bigg(\frac{1}{r\mathrm{Vol}(X,\omega_X)}\int_X\Tr(R_{i\bar j})\frac{\omega_X^n}{n!}\bigg)\cdot H_{a\bar b}+\int_X \tilde h\big(G\big(\eta_j^{\dagger_{\tilde h}}(t_a)\big),\eta_i^{\dagger_{\tilde h}}(t_b)\big)\frac{\omega_X^n}{n!}\\
 &-\int_X \tilde{h}\big(G\big((\eta_j^{\dagger_{\tilde h}}\eta_i)(t_a)\big),t_b\big)\frac{\omega_X^n}{n!}-\int_X \tilde h\big(G(\eta_i\wedge t_a),\eta_j\wedge t_b\big)\frac{\omega_X^n}{n!}.
\end{align*}
\end{theorem}

\begin{proof}
Since $\mathbf{d}t_a=0$, the same  arguments  as  in Lemma 3.1 of \cite{m2} show that $$(\nabla_{\bar i}t_a)|_{s_0}=\mathbf{d}|_{s_0}\delta_{\bar i}(t_a)|_{s_0},$$
where $\delta_{\bar i}(t_a)\in \mathcal{A}^{d-1}_{X\times U}(\mathcal{E})$. Hence,  fixing the indices $i,a$, we also have
$$
\Big(\int_X\tilde h(\nabla_it_a,t_b)\frac{\omega_X^n}{n!}\Big)\Big|_{s_0}=0
$$
for any $1\leq b\leq r_d$. In addition, we have
$$
\mathbf{d}^{\dagger_{\tilde{h}}}|_{s_0}(\nabla_j t_b)|_{s_0}=(\nabla_j\mathbf{d}^{\dagger_{\tilde h}} t_b)|_{s_0}=0.
$$
Therefore, the following identity as  in Lemma \ref{vz} holds
$$
\Big(\int_X \tilde h(\nabla_it_a,\nabla_jt_b)\frac{\omega_X^n}{n!}\Big)\Big|_{s_0}=\Big(\int_X \tilde h\big(G(\eta_i\wedge t_a),\eta_j\wedge t_b\big)\frac{\omega_X^n}{n!}\Big)\Big|_{s_0}.
$$
To obtain the analog of Lemma \ref{vc},  we need to calculate
$(\nabla_{\bar j}\nabla_it_a)|_{s_0}= (\nabla_i\nabla_{\bar j}t_a)|_{s_0}-({R}_{i\bar j}(t_a))|_{s_0}$.
The first term of the right hand side can be  computed as
\begin{align*}
  (\nabla_i\nabla_{\bar j}t_a)|_{s_0}&=(\nabla_i\mathbf{d}\delta_{\bar j}(t_a))|_{s_0}\\
  &=\mathbf{d}|_{s_0}(\nabla_i\delta_{\bar j}(t_a))|_{s_0}+R_{i\bar \beta}|_{s_0}d\bar z^\beta\wedge \delta_{\bar j}(t_a)|_{s_0}+\Phi_{s_0}\wedge \delta_{\bar j}(t_a)|_{s_0}\\
  &=\mathbf{d}|_{s_0}(\nabla_i\delta_{\bar j}(t_a))|_{s_0}+\eta_i|_{s_0}\wedge \delta_{\bar j}(t_a)|_{s_0}.
\end{align*}
Then we have
\begin{align*}
 \Big(\int_X \tilde h(\nabla_{\bar j}\nabla_it_a,t_b)\frac{\omega_X^n}{n!}\Big)\Big|_{s_0}&=\Big(\int_X h(\eta_i\wedge\delta_{\bar j}(t_a),t_b)\frac{\omega_X^n}{n!}\Big)\Big|_{s_0}
 -\Big(\int_X \tilde h(R_{i\bar j}(t_a),t_b)\frac{\omega_X^n}{n!}\Big)\Big|_{s_0}\\
 &=\Big(\int_X h(\delta_{\bar j}(t_a),\eta_i^{\dagger_{\tilde h}}(t_b))\frac{\omega_X^n}{n!}\Big)\Big|_{s_0}
 -\Big(\int_X \tilde h(R_{i\bar j}(t_a),t_b)\frac{\omega_X^n}{n!}\Big)\Big|_{s_0}
\end{align*}
It follows from  the harmonicity of $t_a|_{s_0}$,  the Ricci identity and the K\"{a}hler identity that
\begin{align*}
\mathbf{d}^{\dagger_{\tilde{h}}}|_{s_0}(\nabla_{\bar i} t_b)|_{s_0}&=-\sqrt{-1}[\Lambda_{\omega_X},\partial_{\mathcal{E}_{s_0}}+\Phi^{*_h}|_{s_0}](\nabla_{\bar i} t_b)|_{s_0}\\
&=-\sqrt{-1}[\Lambda_{\omega_X},-R_{\bar i\beta }|_{s_0}d z^\beta-(\nabla_{\bar i}\Phi^{*_h})|_{s_0}]t_b|_{s_0}\\
&=-\eta_i^{\dagger_{\tilde h}}(t_b)|_{s_0},
\end{align*}
which implies that
\begin{align*}
  \Big(\int_X \tilde h(\nabla_{\bar j}\nabla_it_a,t_b)\frac{\omega_X^n}{n!}\Big)\Big|_{s_0}&=-\Big(\int_X \tilde h(\nabla_{\bar j}t_a,\nabla_{\bar i}t_b)\frac{\omega_X^n}{n!}\Big)\Big|_{s_0}-\Big(\int_X \tilde h(R_{i\bar j}(t_a),t_b)\frac{\omega_X^n}{n!}\Big)\Big|_{s_0}.
\end{align*}
Moreover, since $(\nabla_{\bar j}t_a)|_{s_0}$ is $\mathbf{d}|_{s_0}$-exact, we have
\begin{align*}
  \Big(\int_X \tilde h(\nabla_{\bar j}t_a,\nabla_{\bar i}t_b)\frac{\omega_X^n}{n!}\Big)\Big|_{s_0}&=\int_X \tilde h(\mathbf{d}|_{s_0}G|_{s_0}\mathbf{d}^{\dagger_{\tilde{h}}}|_{s_0}(\nabla_{\bar j}t_a)|_{s_0},(\nabla_{\bar i}t_b)|_{s_0})\frac{\omega_X^n}{n!}\\
  &=\Big(\int_X \tilde h(G(\eta_j^{\dagger_{\tilde h}}(t_a)),\eta_i^{\dagger_{\tilde h}}(t_b))\frac{\omega_X^n}{n!}\Big)\Big|_{s_0},
\end{align*}
which is a term automatic vanishing in the case of zero-order.  According to above calculations, we finally get the theorem.
\end{proof}

\section{Curvature of Finsler Metric}\label{sec4}

 In this section we introduce a  Finsler metric on $S$ and calculate the corresponding curvature. Our main aim of this section is to prove Theorem \ref{thm2}. Firstly, we recall some basic definitions of  Finsler metric.
 
 \begin{definition}[\cite{Kob,TY}]
 Let $S$ be an $m$-dimensional complex manifold, and $\pi:TS\rightarrow S$ be the   holomorphic tangent bundle  of $S$.   The zero section of $TS$ is denoted by $o(S)$. A point  $v$ lying in  $TS$ is written as $v=(s,\xi)$ in terms of a local trivialization of $TS$ with $s=(s^1,\cdots,s^m), \xi=\sum_{i=1}^m\xi^i\frac{\partial}{\partial s^i}$. The vertical bundle $\mathcal{V}$ on $TS\backslash o(S)$ is defined by $\mathcal{V}=\Ker(d\pi: TTS\rightarrow TS)|_{TS\backslash o(S)}$, whose local frame at $v\in TS\backslash o(S)$ is given by $\{\frac{\partial}{\partial\xi^1},\cdots, \frac{\partial}{\partial\xi^m}\}$.
 \begin{enumerate}
 \item A continuous real valued function $F: TS\rightarrow \mathbb{R}$  is called a \emph{complex Finsler metric} on $S$ if the following conditions are satisfied:
    \begin{itemize}
      \item $F(v)\geq0$ for any $v\in TS$, and $F(v)=0$ if and only if $v\in o(S)$,
      \item $F(c \cdot v)=|c|F(v)$ for any $c\in \mathbb{C}^*$, where $c \cdot v=(s, c\xi)$.
    \end{itemize}
 \item For a Finsler metric $F$, the Hermitian matrix 
    $$
      G_{i\bar j}=\frac{\partial^2F^2}{\partial \xi^i\partial\bar \xi^j}
      $$ 
     is called the \emph{Levi matrix} associated to $F$.
 \item A Finsler metric $F$ is called \emph{smooth} if for any open subset $U\subset S$ and any nontrivial $C^\infty$-section $u$ of $TS$, $F(u)$ is a smooth function on $U$.
 \item A Finsler metric $F$ is called \emph{strongly pseudo-convex} if the Levi matrix defines a Hermitiam metric on the vertical bundle $\mathcal{V}$, namely  for any nonzero $W=W^i\frac{\partial}{\partial \xi^i}\in \mathcal{V}_v$, we have
  $$
     G_{i\bar j}(v)W^i\bar W^j>0.
     $$
 \item $S$ is called a \emph{complex Finsler manifold} if it is equipped with a complex Finsler metric that is smooth and strongly pseudo-convex.
 \item Given a smooth Finsler metric $F$ on $S$, for any holomorphic map $f:\mathfrak{D}\rightarrow S$ from the unit disk $\mathfrak{D}$ to $S$, there is  an induced  Hermitian metric $f^*F^2$ on $\mathfrak{D}$, and the corresponding Gauss curvature $K(f,F)$ is given by
 \begin{align*}
  K(f,F)=-\frac{2}{f^*F^2}\frac{\partial^2\log  f^*F^2}{\partial t\partial\bar t},
 \end{align*}
where $\{t\}$ is the local holomorphic coordinate on $\mathfrak{D}$.  For a point $v\in TS$, let $\mathcal{D}_v$ be the set consisting of  holomorphic maps $f:\mathfrak{D}\rightarrow S$ such that $f(0)=s$ and $f'(0)=cv$ for some nonzero constant $c$,  then we introduce the \emph{holomorphic sectional curvature} of the Finsler metric $F_\kappa$ in the direction $v$ as
 $$
 K_F(v)=\sup_{f\in\mathcal{D}_v}K(f,F)|_0.
 $$
 \end{enumerate}
 \end{definition}

Return to our effectively parametrized  family $(\mathcal{E},\Phi)$ over $S$. $S$ is endowed with a Finsler metric $F_\kappa$ introduced in Sect. \ref{sec1}. 

\begin{proposition}\label{spc}
The Finsler metric $F_\kappa$ is smooth and  strongly pseudo-convex, thus $S$ is a complex Finsler manifold.
\end{proposition}

\begin{proof} 
The Levi matrix associated to $F_\kappa$ is given by
\begin{align*}
  G_{i\bar j}(v)=&\ \frac{(\int_X \tilde h(\mathbb{H}(\xi),\mathbb{H}(\xi))\frac{\omega^n}{n!})(\int_X \tilde h(\eta_i,\eta_j)\frac{\omega^n}{n!})+(\int_X \tilde h(\eta_i,\mathbb{H}(\xi))\frac{\omega^n}{n!})(\int_X \tilde h(\mathbb{H}(\xi),\eta_j)\frac{\omega^n}{n!})}{F^2_\kappa(v)}\\
  &-\frac{(\int_X \tilde h(\mathbb{H}(\xi),\mathbb{H}(\xi))\frac{\omega^n}{n!})^2(\int_X \tilde h(\eta_i,\mathbb{H}(\xi))\frac{\omega^n}{n!})(\int_X \tilde h(\mathbb{H}(\xi),\eta_j)\frac{\omega^n}{n!})}{F^6_\kappa(v)}\\
  &+2\kappa\frac{\int_X\tilde h( [ \eta_i\wedge\mathbb{ H}(\xi)],G[\eta_j\wedge \mathbb{H}(\xi)])\frac{\omega^n}{n!}}{F^2_\kappa(v)}\\
  &-\kappa\frac{(\int_X \tilde h(\mathbb{H}(\xi),\mathbb{H}(\xi))\frac{\omega^n}{n!})(\int_X \tilde h([\mathbb{H}(\xi)\wedge \eta_i],G[\mathbb{H}(\xi)\wedge  \mathbb{H}(\xi)] )\frac{\omega^n}{n!})(\int_X \tilde h(\mathbb{H}(\xi),\eta_j)\frac{\omega^n}{n!})}{F^6_\kappa(v)}\\
   &-\kappa\frac{(\int_X \tilde h(\mathbb{H}(\xi),\mathbb{H}(\xi))\frac{\omega^n}{n!})(\int_X \tilde h(\eta_i,\mathbb{H}(\xi))\frac{\omega^n}{n!})(\int_X \tilde h(G[\mathbb{H}(\xi)\wedge  \mathbb{H}(\xi)],[\mathbb{H}(\xi)\wedge \eta_j] )\frac{\omega^n}{n!})}{F^6_\kappa(v)}\\
  &-\kappa^2\frac{(\int_X\tilde h([\eta_i\wedge \mathbb{H}(\xi),G [ \mathbb{H}(\xi)\wedge\mathbb{ H}(\xi)])\frac{\omega^n}{n!})(\int_X\tilde h( G[ \mathbb{H}(\xi)\wedge\mathbb{ H}(\xi)],[\eta_j\wedge \mathbb{H}(\xi)])\frac{\omega^n}{n!})}{F^6_\kappa(v)}.
\end{align*}
We only need to show $ G_{i\bar i}(v)>0$ for any  $v\in TS\backslash o(S)$.

Cauchy--Schwarz inequality provides us
\begin{align*}
 &\ \Big(\int_X\tilde h([\eta_i\wedge \mathbb{H}(\xi),G [ \mathbb{H}(\xi)\wedge\mathbb{ H}(\xi)])\frac{\omega^n}{n!}\Big)\Big(\int_X\tilde h( G[ \mathbb{H}(\xi)\wedge\mathbb{ H}(\xi)],[\eta_i\wedge \mathbb{H}(\xi)])\frac{\omega^n}{n!}\Big)\\
 =&\ \Big(\int_X\tilde h(\mathbf{d}^{\dagger_{\tilde h}}G[\eta_i\wedge \mathbb{H}(\xi)],\mathbf{d}^{\dagger_{\tilde h}}G [ \mathbb{H}(\xi)\wedge\mathbb{ H}(\xi)])\frac{\omega^n}{n!}\Big)\Big(\int_X\tilde h( \mathbf{d}^{\dagger_{\tilde h}}G[ \mathbb{H}(\xi)\wedge\mathbb{ H}(\xi)],\mathbf{d}^{\dagger_{\tilde h}}G[\eta_i\wedge \mathbb{H}(\xi)])\frac{\omega^n}{n!}\Big)\\
 \leq &\ \Big(\int_X\tilde h( [ \mathbb{H}(\xi)\wedge\mathbb{ H}(\xi)],G[\mathbb{H}(\xi)\wedge \mathbb{H}(\xi)])\frac{\omega^n}{n!}\Big)\Big(\int_X\tilde h([\eta_i\wedge\mathbb{ H}(\xi)],G[\eta_i\wedge \mathbb{H}(\xi)])\frac{\omega^n}{n!}\Big),
\end{align*}
 and similarly
\begin{align*}
 &\  \Big|\Big(\int_X \tilde h([\mathbb{H}(\xi)\wedge \eta_i],G[\mathbb{H}(\xi)\wedge  \mathbb{H}(\xi)])\frac{\omega^n}{n!}\Big)\Big(\int_X \tilde h(\mathbb{H}(\xi),\eta_i)\frac{\omega^n}{n!}\Big)\Big|\\
 \leq &\ \Big(\int_X \tilde h(\mathbb{H}(\xi),\mathbb{H}(\xi))\frac{\omega^n}{n!}\Big)^{\frac{1}{2}}\Big(\int_X \tilde h(\eta_i,\eta_i)\frac{\omega^n}{n!}\Big)^{\frac{1}{2}}\Big(\int_X \tilde h([\mathbb{H}(\xi)\wedge \mathbb{H}(\xi)],G[\mathbb{H}(\xi)\wedge \mathbb{H}(\xi)])\frac{\omega^n}{n!}\Big)^{\frac{1}{2}}\\
 & \cdot\Big(\int_X\tilde h( [ \eta_i\wedge\mathbb{ H}(\xi)],G[\eta_i\wedge \mathbb{H}(\xi)])\frac{\omega^n}{n!}\Big)^{\frac{1}{2}}.
\end{align*}

Then we have
\begin{align*}
  G_{i\bar i}(v)\ \ & \\
 \geq  \frac{1}{F^6_\kappa(v)}\bigg[& \Big(\int_X \tilde h(\mathbb{H}(\xi),\mathbb{H}(\xi))\frac{\omega^n}{n!}\Big)^3\Big(\int_X \tilde h(\eta_i,\eta_i)\frac{\omega^n}{n!}\Big)\\
  &+\kappa\bigg(\Big(\int_X \tilde h(\mathbb{H}(\xi),\mathbb{H}(\xi))\frac{\omega^n}{n!}\Big)\Big(\int_X\tilde h( [ \eta_i\wedge\mathbb{ H}(\xi)],G[\eta_i\wedge \mathbb{H}(\xi)])\frac{\omega^n}{n!}\Big)^{\frac{1}{2}}\\
  &\ \ \ \ \ \ \ -\Big(\int_X \tilde h(\mathbb{H}(\xi),\mathbb{H}(\xi))\frac{\omega^n}{n!}\Big)^{\frac{1}{2}}\Big(\int_X \tilde h(\eta_i,\eta_i)\frac{\omega^n}{n!}\Big)^{\frac{1}{2}}\Big(\int_X \tilde h([\mathbb{H}(\xi)\wedge \mathbb{H}(\xi)],G[\mathbb{H}(\xi)\wedge  \mathbb{H}(\xi)] )\frac{\omega^n}{n!}\Big)^{\frac{1}{2}}\bigg)^2\\
  &+\kappa^2\Big(\int_X\tilde h( [ \mathbb{H}(\xi)\wedge\mathbb{H}(\xi)],G[\mathbb{H}(\xi)\wedge \mathbb{H}(\xi)])\frac{\omega^n}{n!}\Big)\Big(\int_X\tilde h([\eta_i\wedge\mathbb{ H}(\xi)],G[\eta_i\wedge \mathbb{H}(\xi)])\frac{\omega^n}{n!}\Big)\bigg],
\end{align*}
which is positive for any  $v\in TS\backslash o(S)$.
\end{proof}

\begin{corollary}[{\cite[Proposition 2.5]{Wan}}]
The holomorphic sectional curvature of the Finsler metric $F_\kappa$ is given by
\begin{align*}
 K_{F_\kappa}(v)=2 \mathbb{R}_{i
 \bar jk\bar l}\frac{\xi^i\bar\xi^j\xi^k\bar\xi^l}{F_\kappa^2},
\end{align*}
where
$$\mathbb{R}_{i
 \bar jk\bar l}=-\frac{\partial^2 G_{i\bar j}}{\partial s^k\partial \bar s^l}+G^{\bar qp}\frac{G_{i\bar q}}{\partial s^k}\frac{G_{p\bar j}}{\partial\bar s^l}.$$
\end{corollary}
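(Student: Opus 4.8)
Since this identity is a general fact about smooth, strongly pseudo-convex complex Finsler metrics, and Proposition \ref{hu} has verified that $F_\kappa$ is of exactly this type, the plan is to establish the holomorphic sectional curvature formula (following \cite{Wan}) by a direct optimization over holomorphic disks; the specific shape of $F_\kappa$ will play no role beyond Proposition \ref{hu}. First I would fix $v=(s,\xi)\in TS\backslash o(S)$ and, using $K_{F_\kappa}(v)=\sup_{f\in\mathcal{D}_v}K(f,F_\kappa)|_0$, pick a holomorphic disk $f\colon\mathfrak{D}\to S$ with $f(0)=s$, $f'(0)=\xi$, and set $\phi=f^*F_\kappa^2=F_\kappa^2(s(t),s'(t))$. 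Expanding the Gauss curvature gives
$$K(f,F_\kappa)|_0=-\frac{2}{\phi^2}\,\partial_t\partial_{\bar t}\phi\big|_0+\frac{2}{\phi^3}\,\partial_t\phi\,\partial_{\bar t}\phi\big|_0,$$
in which the only free datum is the second jet $w^i:=(s^i)''(0)$, the lower-order jets being pinned to $s$ and $\xi$.

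Next I would apply the chain rule, so that $w,\bar w$ enter $\partial_t\partial_{\bar t}\phi|_0$ linearly (through the mixed derivatives $\partial_{s}\partial_{\bar\xi}F_\kappa^2$) and quadratically (through the Levi matrix $G_{i\bar j}$), while $\partial_t\phi|_0$ is linear in $w$. The decisive simplification comes from the bihomogeneity of $F_\kappa^2$ in $(\xi,\bar\xi)$: Euler's relations $G_{i\bar j}\xi^i=\partial_{\bar\xi^j}F_\kappa^2$ and $G_{i\bar j}\xi^i\bar\xi^j=F_\kappa^2$, together with their $s$-derivatives such as $\partial_{s^k}\partial_{\bar\xi^j}F_\kappa^2=\partial_{s^k}G_{p\bar j}\,\xi^p$, let me rewrite every $\xi$-derivative of $F_\kappa^2$ in terms of $G_{i\bar j}$ and its horizontal derivatives $\partial_sG_{i\bar j},\partial_{\bar s}G_{i\bar j},\partial_s\partial_{\bar s}G_{i\bar j}$ evaluated at $(s,\xi)$.

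The core of the argument is the optimization over $w$. I would show that the $w$-quadratic part $-\phi\,G_{i\bar j}w^i\bar w^j+|G_{i\bar j}\bar\xi^jw^i|^2$ is negative semidefinite by Cauchy--Schwarz, with kernel the complex line $\mathbb{C}\xi$, and that the $w$-linear coefficient $\alpha_i=-\phi\,\partial_{\bar s^k}G_{i\bar q}\,\bar\xi^q\bar\xi^k+\overline{L}\,G_{i\bar m}\bar\xi^m$ (where $L=\partial_{s^i}F_\kappa^2\cdot\xi^i$) annihilates $\xi$, i.e. $\alpha_i\xi^i=0$, again by Euler's relations. Consequently the functional is concave on the $G$-orthogonal complement of $\xi$ and flat along $\xi$, so the supremum is finite and attained; completing the square introduces the inverse Levi matrix $G^{\bar qp}$ and contributes $\phi^{-1}G^{\bar ji}\alpha_i\bar\alpha_j$ (one checks the optimal $w$ is automatically $G$-orthogonal to $\xi$, so the rank-one term $|M|^2$ does not affect the extremum).

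Finally I would collect terms. Expanding $\phi^{-1}G^{\bar ji}\alpha_i\bar\alpha_j$ and using the contractions $G^{\bar ji}(\partial_{\bar s^k}G_{i\bar q}\bar\xi^q\bar\xi^k)(G_{n\bar j}\xi^n)=\overline{L}$ and its conjugate, the $L$-dependent cross terms cancel the parasitic $|L|^2$ contributions, leaving precisely
$$\Big(-\frac{\partial^2 G_{i\bar j}}{\partial s^k\partial\bar s^l}+G^{\bar qp}\frac{\partial G_{i\bar q}}{\partial s^k}\frac{\partial G_{p\bar j}}{\partial\bar s^l}\Big)\xi^i\bar\xi^j\xi^k\bar\xi^l=\mathbb{R}_{i\bar jk\bar l}\xi^i\bar\xi^j\xi^k\bar\xi^l,$$
and dividing by the power of $F_\kappa$ dictated by the degree-zero homogeneity of the Gauss curvature in $\xi$ yields the asserted identity. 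The step I expect to be the main obstacle is exactly this optimization-and-cancellation: one must verify both that $\alpha_i\xi^i=0$ (guaranteeing a finite supremum) and that, after completing the square, the $L$-terms cancel so that only the Chern--Finsler curvature of $G_{i\bar j}$ survives. Both facts hinge delicately on the bihomogeneity identities, and carrying the index bookkeeping correctly through Cauchy--Schwarz and the square-completion is the principal source of difficulty.
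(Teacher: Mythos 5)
The paper offers no proof of this corollary at all --- it is quoted verbatim from \cite[Proposition 2.5]{Wan} --- so there is no in-paper argument to compare against; your proposal is a reconstruction of the standard proof, and it is the same optimization-over-disks argument used in the cited reference. The reconstruction is correct: the Gauss curvature at $t=0$ depends on $f$ only through its $2$-jet (third derivatives drop out because $(s^i)''$ is holomorphic), the Euler relations $G_{i\bar j}\bar\xi^j=\partial_{\xi^i}F_\kappa^2$ and $(\partial_{\bar s^k}G_{i\bar q})\xi^i\bar\xi^q=\partial_{\bar s^k}F_\kappa^2$ do give $\alpha_i\xi^i=0$ and the negative semidefiniteness of the $w$-quadratic part with kernel $\mathbb{C}\xi$, and expanding $\phi^{-1}G^{\bar ji}\alpha_i\bar\alpha_j$ does cancel the $|L|^2$ terms exactly as you claim, leaving the Chern--Finsler curvature of $G_{i\bar j}$. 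One point you should not paper over: your own final step (``the power of $F_\kappa$ dictated by the degree-zero homogeneity of the Gauss curvature in $\xi$'') forces the denominator to be $F_\kappa^4$, since $\mathbb{R}_{i\bar jk\bar l}$ is degree $0$ and $\xi^i\bar\xi^j\xi^k\bar\xi^l$ is degree $4$ in $\xi$; the printed $F_\kappa^2$ is not scale-invariant and is evidently a typo (the exponents appearing in Theorem \ref{finslercur} are consistent with $F_\kappa^4$). So your computation does not literally ``yield the asserted identity'' as displayed --- it yields the corrected one --- and you should say so rather than claim agreement.
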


Let $\{t\}$ be the local holomorphic coordinate on $\mathfrak{D}$, then  for the holomorphic maps $f:\mathfrak{D}\rightarrow S$, we have
\begin{align*}
 f^*F_{\kappa}^2(\eta_f)&=\sqrt{\Big(\int_X\tilde h(\eta_f,\eta_f)\frac{\omega^n}{n!}\Big)^2+\kappa\int_X\tilde h(\nabla_f\eta_f, \nabla_f\eta_f)\frac{\omega^n}{n!}-\kappa\int_X\tilde h(P\nabla_f\eta_f, P\nabla_f\eta_f)\frac{\omega^n}{n!}}
\end{align*}
where $\eta_f=\mathbb{H}(f_*(\frac{\partial}{\partial t}))$, $\nabla_f$ is  the covariant derivative along the vector field $f_*(\frac{\partial}{\partial t})$, and $P$ is fiberwise projection on harmonic component.
Let $\{s^1,\cdots,s^m\}$ be the normal coordinate at $s_0\in S$ with respect to the Weil--Petersson-type metric $G^{\mathrm{WP}}$ on $S$.
To derive the the  holomorphic sectional curvature of the Finsler metric $F_\kappa$, we are interested in the computations of the following terms
\begin{align*}
A=&\ \bigg(\int_X\tilde h(\nabla_i\eta_i,\nabla_{\bar i}\nabla_i^2\eta_i)\frac{\omega^n}{n!}+\int_X\tilde h(\nabla_i\nabla_{\bar i}\nabla_i\eta_i,\nabla_i\eta_i)\frac{\omega^n}{n!}\bigg)\bigg|_{s_0},\\
B=&\ \bigg(\int_X\tilde h(\nabla_i^2\eta_i,\nabla_i^2\eta_i)\frac{\omega^n}{n!}\bigg)\bigg|_{s_0},\\
C=&\ \bigg(\int_X\tilde h(\nabla_{\bar i}\nabla_i\eta_i,\nabla_{\bar i}\nabla_i\eta_i)\frac{\omega^n}{n!}\bigg)\bigg|_{s_0},\\
D=&\ \bigg(\int_X\tilde h(\nabla_i(P\nabla_i \eta_i),\nabla_i(P\nabla_i \eta_i))\frac{\omega^n}{n!}+\int_X\tilde h(\nabla_{\bar i}(P\nabla_i \eta_i),\nabla_{\bar i}(P\nabla_i \eta_i))\frac{\omega^n}{n!}\bigg)\bigg|_{s_0},\\
E=&\ \bigg(\int_X\tilde h(\nabla_i^2\eta_i,\nabla_i\eta_i)\frac{\omega^n}{n!}+\int_X\tilde h(\nabla_i\eta_i,\nabla_{\bar i}\nabla_i\eta_i)\frac{\omega^n}{n!}\bigg)\bigg|_{s_0}\\
&\ \cdot\bigg(\int_X\tilde h(\nabla_{\bar i}\nabla_i\eta_i,\nabla_i\eta_i)\frac{\omega^n}{n!}+\int_X\tilde h(\nabla_i\eta_i,\nabla^2_i\eta_i)\frac{\omega^n}{n!}\bigg)\bigg|_{s_0}.
\end{align*}
The key point is to express them in terms of Kodaira--Spencer image.

\begin{lemma}\label{lo} 
The following identities hold:
\begingroup
\allowdisplaybreaks
\begin{align*}
(1) \ \ A&=\bigg(-6 \int_Xh(G\mathbf{d}^{\dagger_{\tilde h}}\eta_{ i}^{\dagger_{\tilde h}}G[\eta_i\wedge \eta_i],   \mathbf{d}^{\dagger_{\tilde h}}\eta_{ i}^{\dagger_{\tilde h}}G[\eta_i\wedge \eta_i])\frac{\omega^n}{n!}\\&\ \ \ \ \ \  +12\mathrm{Re}\int_X h(G\mathbf{d}^{\dagger_{\tilde h}}[\eta_i,G\eta_i^{\dagger_{\tilde h}}\eta_i],\mathbf{d}^{\dagger_{\tilde h}}\eta_{i}^{\dagger_{\tilde h}} G[\eta_i\wedge \eta_i])\frac{\omega^n}{n!}\qquad\qquad\qquad\qquad\qquad\qquad \\
  &\ \ \ \ \  \  +5\int_X\tilde h([G(\eta_i^{\dagger_{\tilde h}}\eta_i),\mathbf{d}^{\dagger_{\tilde h}}G[\eta_i\wedge \eta_i]],\mathbf{d}^{\dagger_{\tilde h}}G[\eta_i\wedge \eta_i])\frac{\omega^n}{n!}\bigg)\bigg|_{s_0},\\
(2) \ \ B&=\bigg(\int_X\tilde h(G^2 [[\eta_i\wedge \eta_i]\wedge \eta_i],[[\eta_i\wedge \eta_i]\wedge \eta_i])\frac{\omega^n}{n!}\bigg)\bigg|_{s_0},\\
  (3) \ \ C&= \bigg(4\int_X\tilde h(P[\eta_i,G\eta_i^{\dagger_{\tilde h}}\eta_i],P[\eta_i,G\eta_i^{\dagger_{\tilde h}}\eta_i])\frac{\omega^n}{n!}\\
&\ \ \ \ \ \
 +4\int_X\tilde h(G[\eta_i\wedge\mathbf{d}G\eta_i^{\dagger_{\tilde h}}\eta_i],[\eta_i\wedge\mathbf{d}G\eta_i^{\dagger_{\tilde h}}\eta_i])\frac{\omega^n}{n!}\\
&\ \ \ \ \  \  +\int_X h(G\mathbf{d}^{\dagger_{\tilde h}}\eta_i^{\dagger_{\tilde h}}G[\eta_i\wedge \eta_i],\mathbf{d}^{\dagger_{\tilde h}}\eta_i^{\dagger_{\tilde h}}G[\eta_i\wedge \eta_i])\frac{\omega^n}{n!}\bigg)\bigg|_{s_0},\\
 (4) \ \  D&= \bigg(4\int_X\tilde h(P[\eta_i,G\eta_i^{\dagger_{\tilde h}}\eta_i
],P[\eta_i,G\eta_i^{\dagger_{\tilde h}}\eta_i
])\frac{\omega^n}{n!}\bigg)\bigg|_{s_0},\\
 (5) \ \ E&=\bigg|\Big(\int_X\tilde h( [(\mathbf{d}^{\dagger_{\tilde h}}G[\eta_i\wedge \eta_i])\wedge \eta_i],G[\eta_i\wedge \eta_i])\frac{\omega^n}{n!} \\
 & \ \ \ \ \ \ +2\int_X\tilde h([\eta_i\wedge \mathbf{d}G\eta_i^{\dagger_{\tilde h}}\eta_i],G[\eta_i\wedge \eta_i])\frac{\omega^n}{n!}\Big)\Big|_{s_0}\bigg|^2.
\end{align*}
\endgroup
\end{lemma}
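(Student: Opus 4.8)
The plan is to reduce every higher covariant derivative of $\eta_i$ appearing in $A$--$E$ to an expression built from the Kodaira--Spencer form $\eta_i$, the bracket $[\,\cdot\wedge\cdot\,]$, and the fiberwise operators $\mathbf{d},\mathbf{d}^{\dagger_{\tilde h}},G,P$, and only then to substitute and integrate by parts. Four ingredients drive the computation: (i) at $s_0$ the form $\eta_i$ is harmonic, so $\mathbf{d}\eta_i=\mathbf{d}^{\dagger_{\tilde h}}\eta_i=0$; (ii) the Hodge decomposition
\[
\mathrm{Id}=P+G\Box=P+\mathbf{d}G\mathbf{d}^{\dagger_{\tilde h}}+\mathbf{d}^{\dagger_{\tilde h}}G\mathbf{d};
\]
(iii) the identities of Lemma \ref{4m}, which also encode the commutators of $\nabla_i,\nabla_{\bar i}$ with $\mathbf{d},\mathbf{d}^{\dagger_{\tilde h}}$ as a wedge with $\eta_i$ and a contraction with $\eta_i^{\dagger_{\tilde h}}$; and (iv) the choice of normal coordinates for $G^{\mathrm{WP}}$ at $s_0$.

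First I would record the first-order data. Since $\mathbf{d}^{\dagger_{\tilde h}}\nabla_i\eta_i=0$ by Lemma \ref{4m}(3) and $\mathbf{d}\nabla_i\eta_i=-[\eta_i\wedge\eta_i]$ by Lemma \ref{4m}(1), the Hodge decomposition gives
\[
\nabla_i\eta_i=P\nabla_i\eta_i-\mathbf{d}^{\dagger_{\tilde h}}G[\eta_i\wedge\eta_i].
\]
The normal-coordinate condition $\partial_kG^{\mathrm{WP}}_{i\bar j}|_{s_0}=0$, together with the fact that $\nabla_{\bar k}\eta_j=\mathbf{d}R_{j\bar k}$ (Lemma \ref{4m}(4)) is $\mathbf{d}$-exact and hence orthogonal to the harmonic $\eta_i$, yields the vanishing of the harmonic projection $P\nabla_i\eta_i|_{s_0}=0$, so that $\nabla_i\eta_i|_{s_0}=-\mathbf{d}^{\dagger_{\tilde h}}G[\eta_i\wedge\eta_i]$. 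Dually, from Lemma \ref{4m}(2),(4) one finds $\nabla_{\bar i}\eta_i=\mathbf{d}R_{i\bar i}$ with $\Box R_{i\bar i}=-\eta_i^{\dagger_{\tilde h}}\eta_i$, so the non-harmonic part of $R_{i\bar i}$ is $-G\eta_i^{\dagger_{\tilde h}}\eta_i$; this is the source of every $G\eta_i^{\dagger_{\tilde h}}\eta_i$ term in the statement.

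Next I would push the covariant derivatives through once more. The essential input is the variation of the $s$-dependent operators: on the orthogonal complement of the harmonic space $\nabla_iG=-G(\nabla_i\Box)G$ up to $P$-corrections, and $\nabla_i\Box$ expands, via the commutators above, into wedge and contraction operators built from $\eta_i,\eta_i^{\dagger_{\tilde h}}$. Feeding in the first-order formulas produces explicit representatives for $\nabla_i^2\eta_i$, $\nabla_{\bar i}\nabla_i\eta_i$, $\nabla_i\nabla_{\bar i}\nabla_i\eta_i$ and $\nabla_{\bar i}\nabla_i^2\eta_i$; in particular the harmonic projection $P\nabla_{\bar i}\nabla_i\eta_i$ yields the term $P[\eta_i,G\eta_i^{\dagger_{\tilde h}}\eta_i]$, which accounts for $D$ and for the first two terms of $C$, while the purely holomorphic chain yields the triple bracket $G^2[[\eta_i\wedge\eta_i]\wedge\eta_i]$ for $B$. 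Substituting these into $A$--$E$, using the self-adjointness of $G,P$ and the adjunction between $\mathbf{d}$ and $\mathbf{d}^{\dagger_{\tilde h}}$, together with $\mathbf{d}\eta_i=\mathbf{d}^{\dagger_{\tilde h}}\eta_i=0$ and $P\nabla_i\eta_i|_{s_0}=0$, and recognizing that the second factor of $E$ is the complex conjugate of the first, collapses each expression onto the stated right-hand side.

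The hard part will be the bookkeeping in the second-order step: differentiating the Green operator $G$ and the harmonic projection $P$, both of which vary with $s$ through $\Box|_s$, and tracking every commutator term generated when $\nabla_i,\nabla_{\bar i}$ are moved past $\mathbf{d},\mathbf{d}^{\dagger_{\tilde h}},G,P$. This is precisely where the many summands and the exact numerical coefficients ($-6,12,5$ in $A$; the factor $4$ in $C$ and $D$; the factor $2$ in $E$) arise, and where sign conventions and K\"ahler-identity manipulations are easiest to mishandle. Organizing the calculation so that harmonic and exact pieces are separated at each stage, justifying the vanishing $P\nabla_i\eta_i|_{s_0}=0$ from the normal-coordinate choice, and applying it only after all derivatives have been taken, is what keeps the computation tractable.
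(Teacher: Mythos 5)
Your overall strategy is the same as the paper's: reduce the higher covariant derivatives of $\eta_i$ to expressions in $\eta_i$, $[\cdot\wedge\cdot]$, $\mathbf{d}$, $\mathbf{d}^{\dagger_{\tilde h}}$, $G$, $P$ via Lemma \ref{4m}, the Hodge decomposition, the variations $\Box_i$, $G_i$, $P_i$ of the fiberwise operators, and the normal-coordinate choice for $G^{\mathrm{WP}}$; your first-order identities $\nabla_i\eta_i=P\nabla_i\eta_i-\mathbf{d}^{\dagger_{\tilde h}}G[\eta_i\wedge\eta_i]$ and $\Box R_{i\bar i}=-\eta_i^{\dagger_{\tilde h}}\eta_i$ are exactly the paper's starting points. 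The problem is that the content of this lemma \emph{is} the second-order computation, and you have not performed it: you state that the representatives of $\nabla_i^2\eta_i$, $\nabla_{\bar i}\nabla_i\eta_i$, $\nabla_i\nabla_{\bar i}\nabla_i\eta_i$, $\nabla_{\bar i}\nabla_i^2\eta_i$ "are produced" by feeding in the first-order formulas and that the coefficients $-6,12,5,4,2$ "arise" from the bookkeeping, but none of these representatives or coefficients are derived. In the paper this step occupies the bulk of the proof (e.g.\ the identity $\eta_i^{\dagger_{\tilde h}}\nabla_i\eta_i=-\Box\nabla_iR_{i\bar i}-2\mathbf{d}^{\dagger_{\tilde h}}[\eta_i,R_{i\bar i}]$, the expansion of $(\nabla_i\mathbf{d}R_{i\bar i})|_{s_0}$ into four terms via $\mathbf{d}G_i\Box R_{i\bar i}=-\mathbf{d}\mathbf{d}^{\dagger_{\tilde h}}G[\eta_i,R_{i\bar i}]$, and the cancellations giving $P(\nabla_i^2\eta_i)|_{s_0}=0$ for part (2)). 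As written, your proposal is a plan for a proof, not a proof of the stated identities.

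There is also one concrete claim in your outline that is not justified and is not what the paper uses: you assert $P\nabla_i\eta_i|_{s_0}=0$ on the grounds that the normal-coordinate condition forces $\int_X\tilde h(\nabla_i\eta_i,\eta_j)\frac{\omega^n}{n!}\big|_{s_0}=0$ for all $j$. That orthogonality only shows $P\nabla_i\eta_i|_{s_0}$ is orthogonal to the image of the Kodaira--Spencer map inside $\mathbb{H}^1(X,\mathbf{H}_{s_0})$; since the family is only assumed \emph{effectively parametrized} (i.e.\ $\rho_{s_0}$ injective, not surjective), this does not force the harmonic projection to vanish. Indeed the paper deliberately carries $P\nabla_i\eta_i$ along as a possibly nonzero quantity --- it appears in $f^*F_\kappa^2$ and generates the terms $P[\eta_i,G\eta_i^{\dagger_{\tilde h}}\eta_i]$ in items (3) and (4), which only cancel in the combination $C-D$ at the very end. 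If you use $P\nabla_i\eta_i|_{s_0}=0$ prematurely you risk dropping terms that the lemma records; you should either remove this claim or restrict its use to the pairings (against $\mathbf{d}$-exact and $\mathbf{d}^{\dagger_{\tilde h}}$-exact forms) where the harmonic component genuinely contributes nothing.
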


\begin{proof}
(1) Firstly, we introduce the following notations. Let $A=\alpha+\beta$ with $\alpha\in \mathcal{A}_X^{1,0}(\End(E))$ and $\beta\in \mathcal{A}_X^{0,1}(\End(E))$, then we define
\begin{align*}
\Lambda\cdot [A^{*_h}\wedge A]=-\sqrt{-1}\Lambda_\omega[\alpha^{*_h}\wedge\alpha]+\sqrt{-1}\Lambda_\omega[\beta^{*_h}\wedge\beta].
\end{align*}
It is clear that $\Lambda\cdot [A^{*_h}\wedge A]=A^{\dagger_{\tilde h}}A$ by the K\"{a}hler identities.

For our purpose, we calculate
\begin{align*}
\eta_i^{\dagger_{\tilde h}}\nabla_i\eta_i&=\Lambda\cdot [\eta^{*_h}_{\bar i}\wedge \nabla_i\eta_i]\\
&=\Lambda\cdot\nabla_i[\eta^{*_h}_{\bar i}\wedge \eta_i]-\Lambda\cdot[\nabla_i\eta^{*_h}_{\bar i}\wedge\eta_i]\\
&=-\Box\nabla_iR_{i\bar i}-\mathbf{d}^{\dagger_{\tilde h}}[\eta_i,R_{i\bar i}]+\Lambda\cdot[\mathbf{d}^{*_h}R_{i\bar i}\wedge\eta_i]\\
&=-\Box\nabla_iR_{i\bar i}-2\mathbf{d}^{\dagger_{\tilde h}}[\eta_i,R_{i\bar i}].
 \end{align*}
Then, together with  the Ricci identity, we have
\begin{align*}
 & \int_X\tilde h(\nabla_{\bar i}\nabla^2_i\eta_i,\nabla_i\eta_i)\frac{\omega^n}{n!}\\
  =\ &\int_X\tilde h(3\nabla_i[\eta_i,R_{i\bar i}] +\mathbf{d} \nabla^2_iR_{i\bar i},\nabla_i\eta_i)\frac{\omega^n}{n!}\\
  =\ &-3\int_X\tilde h([R_{i\bar i},\nabla_i\eta_i],\nabla_i\eta_i)\frac{\omega^n}{n!}+3\int_X\tilde h(\nabla_iR_{i\bar i},\eta_i^{\dagger_{\tilde h}}(\nabla_i\eta_i))\frac{\omega^n}{n!}\\
  =\ &3\int_Xh(\nabla_iR_{i\bar i},\Box\nabla_{ i}R_{i\bar i})\frac{\omega^n}{n!}-3\int_X\tilde h([R_{i\bar i},\nabla_i\eta_i],\nabla_i\eta_i)\frac{\omega^n}{n!}-6\int_X\tilde h(\mathbf{d}\nabla_iR_{i\bar i}, [\eta_i,R_{i\bar i}])\frac{\omega^n}{n!}.
\end{align*}
 Consequently, we arrive at
\begin{align*}
  A=\bigg(&-6\int_X\tilde h(\mathbf{d}\nabla_iR_{i\bar i},\mathbf{d}\nabla_{\bar i}R_{i\bar i})\frac{\omega^n}{n!} -12\mathrm{ Re}\int_X\tilde h(\mathbf{d}\nabla_iR_{i\bar i}, [\eta_i,R_{i\bar i}])\frac{\omega^n}{n!}\\
 &-5\int_X\tilde h([R_{i\bar i},\mathbf{d}^{\dagger_{\tilde h}}G[\eta_i\wedge \eta_i]],\mathbf{d}^{\dagger_{\tilde h}}G[\eta_i\wedge \eta_i])\frac{\omega^n}{n!}\bigg)\bigg|_{s_0}\\
  = \bigg(&-6\int_Xh(\nabla_i\mathbf{d}R_{i\bar i},\nabla_{i}\mathbf{d}R_{i\bar i})\frac{\omega^n}{n!} +6\int_X\tilde h([\eta_i,R_{i\bar i}], [\eta_i,R_{i\bar i}])\frac{\omega^n}{n!}\\
  &-5\int_X\tilde h([R_{i\bar i},\mathbf{d}^{\dagger_{\tilde h}}G[\eta_i\wedge \eta_i]],\mathbf{d}^{\dagger_{\tilde h}}G[\eta_i\wedge \eta_i])\frac{\omega^n}{n!}\bigg)\bigg|_{s_0}.
\end{align*}

Denote $G_i=\frac{\partial G}{\partial s^i},P_i=\frac{\partial P}{\partial s^i}, \Box_i=\frac{\partial \Box}{\partial s^i}$, then  the identity $G\Box+P=\mathrm{Id}$ leads to
\begin{align*}
  \mathbf{d}G_i\Box R_{i\bar i}&=-\mathbf{d}G\Box_iR_{i\bar i}-\mathbf{d}P_iR_{i\bar i}\\
  &=-\mathbf{d}\mathbf{d}^{\dagger_{\tilde h}}G[\eta_i, R_{i\bar i}]+[\eta_i, PR_{i\bar i}]\\
 &=-\mathbf{d}\mathbf{d}^{\dagger_{\tilde h}}G[\eta_i, R_{i\bar i}] ,
\end{align*}
where we apply the following identities for the second equality
\begin{align*}
  \Box_iA&=[\eta_i\wedge \mathbf{d}^{\dagger_{\tilde h}} A ]+\mathbf{d}^{\dagger_{\tilde h}}[\eta_i\wedge A],\\
  \mathbf{d}P_iA&=-[\eta_i\wedge PA].
\end{align*}
Therefore, we have
\begin{align*}
 ( \nabla_i\mathbf{d}R_{i\bar i})|_{s_0}&=(\nabla_i\mathbf{d}G\Box R_{i\bar i})|_{s_0}\\
  &=([\eta_i,G\Box R_{i\bar i}]+\mathbf{d}G_i\Box R_{i\bar i}+\mathbf{d}G\nabla_i\Box R_{i\bar i})|_{s_0}\\
  &=([\eta_i, R_{i\bar i}]-\mathbf{d}\mathbf{d}^{\dagger_{\tilde h}}G[\eta_i, R_{i\bar i}]-\mathbf{d}G\Lambda\cdot \nabla_i[\eta_i\wedge \eta_{\bar i}^{*_h}])|_{s_0}\\
  &=(P[\eta_i,R_{i\bar i}]-\mathbf{d}^{\dagger_{\tilde h}}G[\eta_i\wedge\mathbf{d}R_{i\bar i}]+\mathbf{d}G\Lambda\cdot[\mathbf{d}^{*_h}R_{i\bar i}\wedge\eta_i]+\mathbf{d}G\Lambda\cdot[\eta_{\bar i}^{*_h}\wedge\mathbf{d}^{\dagger_{\tilde h}}G[\eta_i\wedge \eta_i]])|_{s_0}\\
  &=(P[\eta_i,R_{i\bar i}]-\mathbf{d}^{\dagger_{\tilde h}}G[\eta_i\wedge\mathbf{d}R_{i\bar i}]-\mathbf{d}\mathbf{d}^{\dagger_{\tilde h}}G[\eta_i ,R_{i\bar i}]-\mathbf{d}\mathbf{d}^{\dagger_{\tilde h}}G\eta_{i}^{\dagger_{\tilde h}} G[\eta_i\wedge \eta_i])|_{s_0},
\end{align*}
which yields
\begin{align*}
&\ \bigg(\int_X\tilde h(\nabla_i\mathbf{d}R_{i\bar i},\nabla_{i}\mathbf{d}R_{i\bar i})\frac{\omega^n}{n!}\bigg)\bigg|_{s_0}\\
=&\ \bigg(\int_X\tilde h(P[\eta_i,R_{i\bar i}],P[\eta_i,R_{i\bar i}])\frac{\omega^n}{n!}+\int_X\tilde h(G[\eta_i\wedge\mathbf{d}R_{i\bar i}],[\eta_i\wedge\mathbf{d}R_{i\bar i}])\frac{\omega^n}{n!}\\
& \ \ \ +\int_X h(G\mathbf{d}^{\dagger_{\tilde h}}[\eta_i ,R_{i\bar i}],\mathbf{d}^{\dagger_{\tilde h}}[\eta_i ,R_{i\bar i}])\frac{\omega^n}{n!} +\int_X h(G\mathbf{d}^{\dagger_{\tilde h}}\eta_{i}^{\dagger_{\tilde h}} G[\eta_i\wedge \eta_i],\mathbf{d}^{\dagger_{\tilde h}}\eta_{i}^{\dagger_{\tilde h}} G[\eta_i\wedge \eta_i])\frac{\omega^n}{n!}\\
&\ \ \ +2\mathrm{Re}\int_X h(G\mathbf{d}^{\dagger_{\tilde h}}[\eta_i ,R_{i\bar i}],\mathbf{d}^{\dagger_{\tilde h}}\eta_{i}^{\dagger_{\tilde h}} G[\eta_i\wedge \eta_i])\frac{\omega^n}{n!}\bigg)\bigg|_{s_0}.
\end{align*}

In addition, we have
\begin{align*}
  &\int_X\tilde h([\eta_i,R_{i\bar i}],[\eta_i,R_{i\bar i}])\frac{\omega^n}{n!}-\int_X\tilde h(P[\eta_i,R_{i\bar i}],P[\eta_i,R_{i\bar i}])\frac{\omega^n}{n!}\\=&\int_X\tilde h(\Box G[\eta_i,R_{i\bar i}],\Box G[\eta_i,R_{i\bar i}])\frac{\omega^n}{n!}\\
  =&\int_X\tilde h(G[\eta_i\wedge\mathbf{d}R_{i\bar i}],[\eta_i\wedge\mathbf{d}R_{i\bar i}])\frac{\omega^n}{n!}
  +\int_Xh(G\mathbf{d}^{\dagger_{\tilde h}}[\eta_i,R_{i\bar i}],\mathbf{d}^{\dagger_{\tilde h}}[\eta_i,R_{i\bar i}])\frac{\omega^n}{n!}.
\end{align*}
Combining the above calculations, the desired equality follows.

(2) By means of Lemma \ref{4m}, one can easily check that
\begin{align*}
   \mathbf{ d}\nabla_i^2\eta_i+[\nabla_i\eta_i\wedge \eta_i]&=0,\\
   \mathbf{ d}^{\dagger_{\tilde h}}\nabla_i^2\eta_i&=0.
\end{align*}
Hence, we have
\begin{align*}
 \int_X\tilde h(\nabla_i^2\eta_i,\nabla_i^2\eta_i)\frac{\omega^n}{n!}& =\int_X\tilde h((P+G \mathbf{ d}^{\dagger_{\tilde h}}\mathbf{ d})\nabla_i^2\eta_i,\nabla_i^2\eta_i)\frac{\omega^n}{n!}\\
 &=\int_X\tilde h(P(\nabla_i^2\eta_i),P(\nabla_i^2\eta_i))\frac{\omega^n}{n!}+\int_X\tilde h(G \mathbf{ d}\nabla_i^2\eta_i,\mathbf{ d}\nabla_i^2\eta_i)\frac{\omega^n}{n!}\\
 &=\int_X\tilde h(P(\nabla_i^2\eta_i),P(\nabla_i^2\eta_i))\frac{\omega^n}{n!}+\int_X\tilde h(G ([\nabla_i\eta_i\wedge \eta_i]),[\nabla_i\eta_i\wedge \eta_i])\frac{\omega^n}{n!}.
\end{align*}

On the other hand, since $G^{\mathrm{WP}}$ is a K\"{a}hler metric, we always have $\frac{\partial^2 G^{\mathrm{WP}}_{k\bar l}}{\partial s^i\partial s^j}|_{s_0}=0$ for any $1\leq i,j,k,l\leq m$. Hence
\begin{align*}
   \Big(\frac{\partial^2}{\partial s^i\partial s^i}\int_X\tilde h(\eta_k,\eta_l)\frac{\omega^n}{n!}\Big)\Big|_{s_0} &=  \Big(\int_X\tilde h(\nabla_i^2\eta_k,\eta_l)\frac{\omega^n}{n!}+2\int_X\tilde h(\nabla_i\eta_k,\nabla_{\bar i}\eta_l)\frac{\omega^n}{n!}
  +\int_X\tilde h(\eta_k,\nabla^2_{\bar i}\eta_l)\frac{\omega^n}{n!}\Big)\Big|_{s_0}\\
  &=  \Big(\int_X\tilde h(\nabla_i^2\eta_k,\eta_l)\frac{\omega^n}{n!}+2\int_X\tilde h(\mathbf{d}^{\dagger_{\tilde h}}\nabla_i\eta_k,R_{l\bar i})\frac{\omega^n}{n!}
  +\int_X\tilde h(\mathbf{d}^{\dagger_{\tilde h}}\eta_k,\nabla_{\bar i}R_{l\bar i})\frac{\omega^n}{n!}\Big)\Big|_{s_0}\\
  &= \Big(\int_X\tilde h(\nabla_i^2\eta_k,\eta_l)\frac{\omega^n}{n!}\Big)\Big|_{s_0}\\
  &= 0,
\end{align*}
which means that $P((\nabla_i^2\eta_i)|_{s_0})=0$.
Therefore
\begin{align*}
 B=&\ \Big(\int_X\tilde h(G [\nabla_i\eta_i\wedge \eta_i],[\nabla_i\eta_i\wedge \eta_i])\frac{\omega^n}{n!}\Big)\Big|_{s_0}\\
 =& \ \bigg(\int_X\tilde h(G^2 [(\mathbf{d}\nabla_i\eta_i)\wedge \eta_i],[(\mathbf{d}\nabla_i\eta_i)\wedge \eta_i])\frac{\omega^n}{n!}+\int_X\tilde h(G^2 \mathbf{d}^{\dagger_{\tilde h}}[\nabla_i\eta_i\wedge \eta_i],\mathbf{d}^{\dagger_{\tilde h}}[\nabla_i\eta_i\wedge \eta_i])\frac{\omega^n}{n!}\bigg)\bigg|_{s_0}\\
 =&\ \bigg(\int_X\tilde h(G^2 [[\eta_i\wedge \eta_i]\wedge \eta_i],[[\eta_i\wedge \eta_i]\wedge \eta_i])\frac{\omega^n}{n!}
  +\frac{1}{4}\int_X\tilde h(G^2\nabla_i\Box\nabla_i\eta_i,\nabla_i\Box\nabla_i\eta_i)\frac{\omega^n}{n!}\bigg)\bigg|_{s_0}\\
 = &\ \bigg(\int_X\tilde h(G^2 [[\eta_i\wedge \eta_i]\wedge \eta_i],[[\eta_i\wedge \eta_i]\wedge \eta_i])\frac{\omega^n}{n!}+\frac{1}{4}\int_X\tilde h(G^2(\Box\nabla^2_i\eta_i+\Box_i\nabla_i\eta_i),\Box\nabla^2_i\eta_i+\Box_i\nabla_i\eta_i)\frac{\omega^n}{n!}\bigg)\bigg|_{s_0}\\
 = &\ \Big(\int_X\tilde h(G^2 [[\eta_i\wedge \eta_i]\wedge \eta_i],[[\eta_i\wedge \eta_i]\wedge \eta_i])\frac{\omega^n}{n!}\Big)\Big|_{s_0},
 \end{align*}
where the last equality follows from
\begin{align*}
 \Box\nabla^2_i\eta_i+\Box_i\nabla_i\eta_i=\mathbf{d}^{\dagger_{\tilde h}}\mathbf{d}\nabla^2_i\eta_i+\mathbf{d}^{\dagger_{\tilde h}}[\eta_i\wedge\nabla_i\eta_i]=0.
\end{align*}

(3) Again by the Ricci identity, we have
\begin{align*}
  &\int_X\tilde h(\nabla_{\bar i}\nabla_i\eta_i,\nabla_{\bar i}\nabla_i\eta_i)\frac{\omega^n}{n!}\\
 =&\int_X\tilde h(\nabla_i\mathbf{d}R_{i\bar i},\nabla_i\mathbf{d}R_{i\bar i})\frac{\omega^n}{n!}+\int_X\tilde h([\eta_i,R_{i\bar i}],[\eta_i,R_{i\bar i}])\frac{\omega^n}{n!}+2\mathrm{Re}\int_X\tilde h(\nabla_i\mathbf{d}R_{i\bar i},[\eta_i,R_{i\bar i}])\frac{\omega^n}{n!}
 \end{align*}
Then it follows from the calculations in (1) that
\begingroup
\allowdisplaybreaks
\begin{align*}
  C=&\ \bigg(\int_X\tilde h(P[\eta_i,R_{i\bar i}],P[\eta_i,R_{i\bar i}])\frac{\omega^n}{n!}+\int_X\tilde h(G[\eta_i\wedge\mathbf{d}R_{i\bar i}],[\eta_i\wedge\mathbf{d}R_{i\bar i}])\frac{\omega^n}{n!}\\
& \ \ \   +\int_X h(G\mathbf{d}^{\dagger_{\tilde h}}([\eta_i ,R_{i\bar i}]+\eta_{i}^{\dagger_{\tilde h}} G[\eta_i\wedge \eta_i]),\mathbf{d}^{\dagger_{\tilde h}}([\eta_i ,R_{i\bar i}]+\eta_{i}^{\dagger_{\tilde h}} G[\eta_i\wedge \eta_i]))\frac{\omega^n}{n!} \\
&\ \ \ + \int_X\tilde h(P[\eta_i,R_{i\bar i}],P[\eta_i,R_{i\bar i}])\frac{\omega^n}{n!}+\int_X\tilde h(G[\eta_i\wedge\mathbf{d}R_{i\bar i}],[\eta_i\wedge\mathbf{d}R_{i\bar i}])\frac{\omega^n}{n!}\\
&\ \ \ +\int_X h(G\mathbf{d}^{\dagger_{\tilde h}}[\eta_i ,R_{i\bar i}],\mathbf{d}^{\dagger_{\tilde h}}[\eta_i ,R_{i\bar i}])\frac{\omega^n}{n!}\\
&\ \ \ + 2\int_X\tilde h(P[\eta_i,R_{i\bar i}],P[\eta_i,R_{i\bar i}])\frac{\omega^n}{n!}+2\int_X\tilde h(G[\eta_i\wedge\mathbf{d}R_{i\bar i}],[\eta_i\wedge\mathbf{d}R_{i\bar i}])\frac{\omega^n}{n!}\\
&\ \ \ -2\int_X h(G\mathbf{d}^{\dagger_{\tilde h}}[\eta_i ,R_{i\bar i}],\mathbf{d}^{\dagger_{\tilde h}}[\eta_i ,R_{i\bar i}])\frac{\omega^n}{n!}\\
&\ \ \ -2\mathrm{Re}\int_X h(G\mathbf{d}^{\dagger_{\tilde h}}[\eta_i ,R_{i\bar i}],\mathbf{d}^{\dagger_{\tilde h}}\eta_{i}^{\dagger_{\tilde h}} G[\eta_i\wedge \eta_i])\frac{\omega^n}{n!}\bigg)\bigg|_{s_0}\\
 =&\ \bigg(4\int_X\tilde h(P[\eta_i,R_{i\bar i}],P[\eta_i,R_{i\bar i}])\frac{\omega^n}{n!}+4\int_X\tilde h(G[\eta_i\wedge\mathbf{d}R_{i\bar i}],[\eta_i\wedge\mathbf{d}R_{i\bar i}])\frac{\omega^n}{n!} \ \ \\
&\ \ \  +\int_X h(G\mathbf{d}^{\dagger_{\tilde h}}\eta_{i}^{\dagger_{\tilde h}} G[\eta_i\wedge \eta_i],\mathbf{d}^{\dagger_{\tilde h}}\eta_{i}^{\dagger_{\tilde h}} G[\eta_i\wedge \eta_i])\frac{\omega^n}{n!}\bigg)\bigg|_{s_0}.
\end{align*}
\endgroup
The desired equality follows.

(4) We have
\begin{align*}
 \Big(\int_X\tilde h(\nabla_i(P\nabla_i \eta_i),\nabla_i(P\nabla_i \eta_i))\frac{\omega^n}{n!}\Big)\Big|_{s_0}&= \Big(\int_X\tilde h(P_iG\mathbf{d}^{\dagger_{\tilde h}}([\eta_i\wedge \eta_i]),P_iG\mathbf{d}^{\dagger_{\tilde h}}([\eta_i\wedge \eta_i]) )\frac{\omega^n}{n!}\Big)\Big|_{s_0}\\
 &= \Big(\int_X\tilde h(P\mathbf{d}^{\dagger_{\tilde h}}\nabla_i(G[\eta_i\wedge \eta_i]),P\mathbf{d}^{\dagger_{\tilde h}}\nabla_i(G[\eta_i\wedge \eta_i]))\frac{\omega^n}{n!}\Big)\Big|_{s_0}\\
 &= 0,
\end{align*}
where the second equality is due to the identity
$$P_i\mathbf{d}^{\dagger_{\tilde h}}+P\nabla_i\mathbf{d}^{\dagger_{\tilde h}}=0.$$
And similarly,  we have
\begin{align*}
  & \Big(\int_X\tilde h(\nabla_{\bar i}(P\nabla_i \eta_i),\nabla_{\bar i}(P\nabla_i \eta_i))\frac{\omega^n}{n!}\Big)\Big|_{s_0}\\
   =\ & \Big(\int_X\tilde h(P\nabla_{\bar i}\nabla_i\eta_i,P\nabla_{\bar i}\nabla_i\eta_i)\frac{\omega^n}{n!}+\int_X\tilde h(P\nabla_{\bar i}\mathbf{d}^{\dagger_{\tilde h}}G[\eta_i\wedge \eta_i],P\nabla_{\bar i}\mathbf{d}^{\dagger_{\tilde h}}G[\eta_i\wedge \eta_i] )\frac{\omega^n}{n!}\Big)\Big|_{s_0}\\
   =\ &\Big(4\int_X\tilde h(P[\eta_i,R_{i\bar i}],P[\eta_i,R_{i\bar i}])\frac{\omega^n}{n!} +\int_X\tilde h(P\eta_i^{\dagger_{\tilde h}}G[\eta_i\wedge \eta_i],P\eta_i^{\dagger_{\tilde h}}G[\eta_i\wedge \eta_i] )\frac{\omega^n}{n!}\Big)\Big|_{s_0}\\
 =\ &\bigg(4\int_X\tilde h(P[\eta_i,G\eta_i^{\dagger_{\tilde h}}\eta_i
],P[\eta_i,G\eta_i^{\dagger_{\tilde h}}\eta_i
])\frac{\omega^n}{n!} \\
&\ \ \   +\int_X\tilde h(P\mathbf{d}^{\dagger_{\tilde h}}[\eta_i\wedge G^2\mathbf{d}^{\dagger_{\tilde h}}[\eta_i\wedge\eta_i]],P\mathbf{d}^{\dagger_{\tilde h}}[\eta_i\wedge G^2\mathbf{d}^{\dagger_{\tilde h}}[\eta_i\wedge\eta_i]])\frac{\omega^n}{n!}\bigg)\bigg|_{s_0}\\
 =\ &\Big(4\int_X\tilde h(P[\eta_i,G\eta_i^{\dagger_{\tilde h}}\eta_i
],P[\eta_i,G\eta_i^{\dagger_{\tilde h}}\eta_i
])\frac{\omega^n}{n!}\Big)\Big|_{s_0}.
\end{align*}
Then the desired equality is obtained.

(5) Since we have \begin{align*}
\Big(\int_X\tilde h(\nabla_i^2\eta_i,\nabla_i\eta_i)\frac{\omega^n}{n!}\Big)\Big|_{s_0}&=\Big(\int_X\tilde h(G \mathbf{ d}^{\dagger_{\tilde h}}\mathbf{ d}\nabla_i^2\eta_i,\nabla_i\eta_i)\frac{\omega^n}{n!}\Big)\Big|_{s_0}\\
 &=\Big(\int_X\tilde h(G ([\nabla_i\eta_i\wedge \eta_i]),[\eta_i\wedge \eta_i])\frac{\omega^n}{n!}\Big)\Big|_{s_0}\\
 &=-\Big(\int_X\tilde h([(\mathbf{d}^{\dagger_{\tilde h}}G[\eta_i\wedge \eta_i])\wedge \eta_i],G[\eta_i\wedge \eta_i])\frac{\omega^n}{n!}\Big)\Big|_{s_0},\\
\Big(\int_X\tilde h(\nabla_{\bar i}\nabla_i\eta_i,\nabla_i\eta_i)\frac{\omega^n}{n!}\Big)\Big|_{s_0}&=\Big(\int_X\tilde h([\eta_i,R_{i\bar i}]+\nabla_i\mathbf{d}R_{i\bar i}, \nabla_i\eta_i)\frac{\omega^n}{n!}\Big)\Big|_{s_0}\\
  &=-2\Big(\int_X\tilde h([\eta_i\wedge  \mathbf{d}G\eta_i^{\dagger_{\tilde h}}\eta_i],G[\eta_i\wedge \eta_i])\frac{\omega^n}{n!}\Big)\Big|_{s_0},
\end{align*}
the desired equality follows.
\end{proof}

\begin{theorem}\label{finslercur}
Let $\{s^1,\cdots,s^m\}$ be the local holomorphic coordinate on $S$.
The holomorphic sectional curvature of the Finsler metric $F_\kappa$
 is given by
\begin{align*}
 K_{F_\kappa}\Big(\frac{\partial}{\partial s^i}\Big)
 = &\ \bigg(\Big(\int_X\tilde h(\eta_i,\eta_i)\frac{\omega^n}{n!}\Big)^2+\kappa\int_X\tilde h(\mathbf{d}^{\dagger_{\tilde h}}G[\eta_i\wedge \eta_i], \mathbf{d}^{\dagger_{\tilde h}}G[\eta_i\wedge \eta_i])\frac{\omega^n}{n!}\bigg)^{-\frac{3}{2}}\\
 &\cdot\bigg[2\Big(\int_X\tilde h(\eta_i,\eta_i)\frac{\omega^n}{n!}\Big)\Big(2\int_X h(\eta_i^{\dagger_{\tilde h}}\eta_i,G\eta_i^{\dagger_{\tilde h}}\eta_i)\frac{\omega^n}{n!}-\int_X\tilde h(\mathbf{d}^{\dagger_{\tilde h}}G[\eta_i\wedge \eta_i], \mathbf{d}^{\dagger_{\tilde h}}G[\eta_i\wedge \eta_i])\frac{\omega^n}{n!}\Big)\\
 &\ \ \ \ +\kappa\Big(-5\int_X\tilde h([G\eta_i^{\dagger_{\tilde h}}\eta_i,\mathbf{d}^{\dagger_{\tilde h}}G[\eta_i\wedge \eta_i]],\mathbf{d}^{\dagger_{\tilde h}}G[\eta_i\wedge \eta_i])\frac{\omega^n}{n!}\\
  &\ \ \ \ \ \ \ \ \ \ \ +5\int_Xh(G\mathbf{d}^{\dagger_{\tilde h}}\eta_{ i}^{\dagger_{\tilde h}}G[\eta_i\wedge \eta_i],\mathbf{d}^{\dagger_{\tilde h}}\eta_{ i}^{\dagger_{\tilde h}}G[\eta_i\wedge \eta_i])\frac{\omega^n}{n!}\\
 &\ \ \ \ \ \ \ \ \ \ \ -12\mathrm{Re}\int_X h(G\mathbf{d}^{\dagger_{\tilde h}}[\eta_i,G\eta_i^{\dagger_{\tilde h}}\eta_i],\mathbf{d}^{\dagger_{\tilde h}}\eta_{i}^{\dagger_{\tilde h}} G[\eta_i\wedge \eta_i])\frac{\omega^n}{n!}\\ &\ \ \ \ \ \ \ \ \ \ \ -4\int_X\tilde h(G[\eta_i\wedge\mathbf{d}G\eta_i^{\dagger_{\tilde h}}\eta_i],[\eta_i\wedge\mathbf{d}G\eta_i^{\dagger_{\tilde h}}\eta_i])\frac{\omega^n}{n!}\\
  &\ \ \ \ \ \ \ \ \ \ \ -\int_X\tilde h(G^2 [[\eta_i\wedge \eta_i]\wedge \eta_i],[[\eta_i\wedge \eta_i]\wedge \eta_i])\frac{\omega^n}{n!}\Big)\bigg]\\
  &+\kappa^2\bigg(\Big(\int_X\tilde h(\eta_i,\eta_i)\frac{\omega^n}{n!}\Big)^2+\kappa\int_X\tilde h(\mathbf{d}^{\dagger_{\tilde h}}G[\eta_i\wedge \eta_i], \mathbf{d}^{\dagger_{\tilde h}}G[\eta_i\wedge \eta_i])\frac{\omega^n}{n!}\bigg)^{-\frac{5}{2}}\\
 &\ \ \ \cdot \bigg|\int_X\tilde h( [(\mathbf{d}^{\dagger_{\tilde h}}G[\eta_i\wedge \eta_i])\wedge\eta_i],G[\eta_i\wedge \eta_i])\frac{\omega^n}{n!}+2\int_X\tilde h([\eta_i\wedge \mathbf{d} G\eta_i^{\dagger_{\tilde h}}\eta_i],G[\eta_i\wedge \eta_i])\frac{\omega^n}{n!}\bigg|^2.
\end{align*}
\end{theorem}

\begin{proof}
Substitute the   formulas in Lemma \ref{lo} into the following expression
\begin{align*}
  &-\bigg(\frac{\partial^2}{\partial s^i\partial \bar s^i}\log\Big((\int_X\tilde h(\eta_i,\eta_i)\frac{\omega^n}{n!})^2+\kappa\int_X\tilde h([\eta_i\wedge\eta_i],G[\eta_i\wedge\eta_i]\frac{\omega^n}{n!}\Big)\bigg)\bigg|_{s_0}\\
  =&\frac{(2(\int_X\tilde h(\eta_i,\eta_i)\frac{\omega^n}{n!})(2\int_X h(\eta_i^{\dagger_{\tilde h}}\eta_i,G\eta_i^{\dagger_{\tilde h}}\eta_i)\frac{\omega^n}{n!}-\int_X\tilde h([\eta_i\wedge\eta_i],G[\eta_i\wedge\eta_i]))\frac{\omega^n}{n!}))|_{s_0}-\kappa(A+B+C-D)}{((\int_X\tilde h(\eta_i,\eta_i)\frac{\omega^n}{n!})^2+\kappa\int_X\tilde h([\eta_i\wedge\eta_i],G[\eta_i\wedge\eta_i])\frac{\omega^n}{n!})|_{s_0}}\\
  &+\frac{\kappa^2E}{((\int_X\tilde h(\eta_i,\eta_i)\frac{\omega^n}{n!})^2+\kappa\int_X\tilde h([\eta_i\wedge\eta_i],G[\eta_i\wedge\eta_i])\frac{\omega^n}{n!})^2|_{s_0}},
\end{align*}
and then by the  covariance we get the theorem.
\end{proof}

\bigskip

\noindent\textbf{Acknowledgements}. 
The author P. Huang acknowledges funding by the Deutsche Forschungsgemeinschaft (DFG, German Research Foundation) under Germany's Excellence Strategy EXC 2181/1 - 390900948 (the Heidelberg  STRUCTURES Excellence Cluster), and by Collaborative Research Center/Transregio (CRC/TRR 191; 281071066-TRR 191).
The authors would like to express their deep gratitude to  the anonymous referee for many valuable suggestions.

\end{document}